\theoremstyle{thmstyleone}%
\newtheorem{theorem}{Theorem}%  meant for continuous numbers
\newtheorem{proposition}[theorem]{Proposition}% 
\theoremstyle{thmstyletwo}%
\newtheorem{remark}{Remark}%
\theoremstyle{thmstylethree}%
\newtheorem{definition}{Definition}%
\numberwithin{equation}{section}
\newcommand{\del}{\partial}
\newcommand{\al}{\alpha}
\newcommand{\la}{\lambda}
\newcommand{\eps}{\varepsilon}
\newcommand{\R}{\mathbb{R}}
\newcommand{\Proj}{\mathsf{\Pi}}
\newcommand{\Density}{\rho}
\newtheorem{lemma}[theorem]{Lemma}
\newtheorem{notation}[theorem]{Notation}
\newtheorem{assumption}[theorem]{Assumption}
\begin{document}

\title{Breakdown of smooth solutions to the M\"uller-Israel-Stewart equations of
relativistic viscous fluids}
\author{Marcelo M. Disconzi, Vu Hoang, Maria Radosz}
\maketitle

\begin{abstract}
We consider equations of M\"uller-Israel-Stewart type describing a relativistic viscous fluid  with bulk viscosity in four-dimensional Minkowski space.
We show that there exists a class of smooth initial data that are 
localized perturbations of constant states 
for which the corresponding unique solutions to the Cauchy problem break down in finite time.
Specifically, we prove that in finite time such solutions develop a singularity or
become unphysical in a sense that we make precise. We also show that in general Riemann invariants do not exist in 1+1 dimensions
for physically relevant equations of state and viscosity coefficients. Finally, we present a more general version of a result by Y. Guo and A.S. Tahvildar-Zadeh: 
we prove large-data singularity formation results for perfect fluids under very general assumptions on the equation of state, allowing any value for the fluid sound speed strictly less than the speed of light.
\end{abstract}

\newcommand{\Addresses}{
\bigskip
  \footnotesize
\noindent M. M. Disconzi, \textsc{Department of Mathematics, Vanderbilt University, 1326 Stevenson Center Ln, Nashville, TN
37212 (USA)}\\
\noindent  V.~Hoang, M. Radosz,  \textsc{Department of Mathematics, University of Texas at San Antonio,
    San Antonio, Texas 78249 (USA)}\par\nopagebreak
 \vspace{0.3cm}
   \noindent \textit{E-mail address}, M. M. Disconzi \texttt{marcelo.disconzi@vanderbilt.edu}\\
  \noindent \textit{E-mail address}, V.~Hoang: \texttt{duynguyenvu.hoang@utsa.edu}\\
  \textit{E-mail address}, M.~Radosz: \texttt{maria\_radosz@hotmail.com}
}

%%=============================================================%%
%% Prefix	-> \pfx{Dr}
%% GivenName	-> \fnm{Joergen W.}
%% Particle	-> \spfx{van der} -> surname prefix
%% FamilyName	-> \sur{Ploeg}
%% Suffix	-> \sfx{IV}
%% NatureName	-> \tanm{Poet Laureate} -> Title after name
%% Degrees	-> \dgr{MSc, PhD}
%% \author*[1,2]{\pfx{Dr} \fnm{Joergen W.} \spfx{van der} \sur{Ploeg} \sfx{IV} \tanm{Poet Laureate} 
%%                 \dgr{MSc, PhD}}\email{iauthor@gmail.com}
%%=============================================================%%

%\author[1]{\fnm{Marcelo M.} \sur{Disconzi}}\email{marcelo.disconzi@vanderbilt.edu}

%\author*[2]{\fnm{Vu} \sur{Hoang}}\email{duynguyenvu.hoang@utsa.edu}
%\equalcont{These authors contributed equally to this work.}

%\author[2]{\fnm{Maria} \sur{Radosz}}\email{maria\_radosz@hotmail.com}
%\equalcont{These authors contributed equally to this work.}

%\affil[1]{
%\orgdiv{Department of Mathematics}, \orgname{Vanderbilt University}, \orgaddress{\street{1326 Stevenson Center Ln}, %\city{Nashville}, \postcode{37212}, \state{TN}, \country{USA}}}

%\affil*[2]{\orgdiv{Department of Mathematics}, \orgname{University of Texas at San Antonio}, \orgaddress{\street{One UTSA %Circle}, \city{San Antonio}, \postcode{78249}, \state{TX}, \country{USA}}}

%%==================================%%
%% sample for unstructured abstract %%
%%==================================%%

%\keywords{Relativistic viscous fluids, M\"uller-Israel-Stewart, Singularity formation}

%%\pacs[JEL Classification]{D8, H51}

%%\pacs[MSC Classification]{35A01, 65L10, 65L12, 65L20, 65L70}

\section{Introduction}
Relativistic hydrodynamics describes the motion of fluids in regimes where relativistic effects are important. This includes flow velocities close to the speed of light and fluids interacting with strong gravitational fields, such as in relativistic plasma produced in heavy-ion collisions or the fluid description of neutron star mergers and black hole accretion disks.  Applied to a wide range of physical phenomena on the largest and smallest length scales, it serves as an essential tool in high-energy nuclear physics, astrophysics and cosmology \cite{Heinz:2013th,Baier:2007ix,RezzollaZanottiBookRelHydro,WeinbergCosmology}. 
The study of relativistic fluid equations started with Einstein \cite{Einstein:1914bx} and Schwarzschild \cite{Schwarzschild1916}, considering perfect fluids, in which case one obtains the well-known
relativistic Euler equations. The mathematical study of 
relativistic perfect fluids goes back to the works of Choquet-Bruhat \cite{Choquet-BruhatFluidsExistence} and Lichnerowicz \cite{Lichnerowicz_MHD_book}, and it is nowadays a very active field of research.
A review of the literature on the mathematical treatment of the relativistic Euler equations
is beyond our scope; we refer the reader to the 
monographs \cite{ChoquetBruhatGRBook,ChristodoulouShocks,ChristodoulouShockDevelopment,SpeckBook}
and references therein.

While the relativistic Euler equations provide a good model to study many physical phenomena and are a rich source of mathematical problems, a physically more complete description of a fluid includes dissipative processes such as viscosity, diffusion and heat conduction. In fact, a thorough understanding of relativistic viscosity is highly relevant to applications in the fundamental physics of the quark-gluon plasma produced in experiments at the Relativistic Heavy Ion Collider and the Large Hadron Collider. For the quark-gluon plasma, it is well-attested that theoretical predictions do not match experimental data if viscosity is not taken into account, and, therefore, the description of the quark-gluon plasma in terms of perfect fluids is regarded as inadequate and was essentially abandoned \cite{Heinz:2013th,Romatschke:2017ejr}. Relativistic viscous fluids are also poised to play a key role in understanding neutron star mergers and to provide information about the properties of high density, degenerate matter. Recent state-of-the art numerical
simulations strongly suggest that the gravitational-wave signal 
of neutron star mergers is likely affected by viscous effects \cite{Alford2018,Shibata:2017jyf,Shibata:2017xht}, a result corroborated by recent studies of the underlying microscopic physics of such mergers \cite{Most:2022yhe}. 
Lastly, relativistic viscous fluids might also be relevant in cosmology due to a variety of dissipative 
processes such as the decoupling of neutrinos and radiation from matter during the early universe 
\cite{Martens, Marcelo2017,Brevik:2020psp}, even though current observations significantly constraint
the inclusion of dissipative effects into cosmological models \cite{LiBarrow,Brevik:2014cxa}.

On the mathematical side, viscous relativistic fluid equations provide a variety of difficult and
interesting problems,
and not much is known about their mathematical properties. 
Although different models have been proposed to describe the dynamics of relativistic
fluids with viscosity, they are all far more complex than the relativistic Euler equations.
Consequently, many basic questions, including the local well-posedness of the Cauchy problem,
remain largely open. 
%Such complexity stems from the well-known constraint that must be taken
%into account in the development of theories of relativistic fluids that incorporate dissipative
%effects. 
On the one hand, models need to be compatible with the large
amount of experimental data on properties of relativistic viscous fluids,
at least when it comes to the quark-gluon-plasma 
\cite{Petersen-2017,STAR:2017ckg,2015LongRange}. Experimental data is much 
more scarce for neutron star mergers \cite{Most:2018eaw}, but this might soon
change now that one can use gravitational waves to probe previously inaccessible 
properties of neutron stars \cite{Most:2018eaw,Most:2019onn,Hammond:2022uua}. On the other hand, theories of relativistic viscous
fluids need to respects basic physical principles such as causality and 
linear stability. 
More precisely, linear stability of constant states
describing thermodynamic equilibrium, henceforth referred to simply as
stability \cite{RezzollaZanottiBookRelHydro}.

Causality, which roughly states that no information propagates
faster than the speed of light, is an essential property of any relativistic theory
(see Section \ref{S:Causality} for a precise definition), whereas
stability is expected to hold whenever dissipative effects are present
\cite{RezzollaZanottiBookRelHydro}. 
While causality and stability are a non-issue for most relativistic matter models,
it turns out that it is very difficult to construct phenomenologically relevant models 
of relativistic viscous fluids that 
respect causality and stability. We refer the reader to
\cite{Baier:2007ix,
Lehner:2017yes,
Denicol:2012cn,
Strickland:2014eua,
Hoult:2020eho,
Kovtun:2019hdm,
DisconziBemficaNoronhaConformal,
DisconziBemficaNoronhaBarotropic,
DisconziBemficaHoangNoronhaRadoszNonlinearConstraints,
Bemfica:2020zjp} 
and references therein for more details
(see also \cite{RezzollaZanottiBookRelHydro,Romatschke:2017ejr} for an overview).

In this work, we consider the theory of relativistic viscous fluids
introduced in the works of Israel, Stewart and M\"uller \cite{MIS-1, MIS-2, MIS-3, MIS-4, MIS-5, hiscock_salmonson_1991}, and whose modern versions have been developed
in \cite{Baier:2007ix,Denicol:2012cn}, commonly referred to as the M\"uller-Israel-Stewart 
equations.
%\footnote{
We remark that the theories introduced in
\cite{MIS-1, MIS-2, MIS-3, MIS-4, MIS-5, hiscock_salmonson_1991,Baier:2007ix,Denicol:2012cn}
are different from each other, but they share many similarities. It is convenient, therefore, for this introductory discussion, to lump them together as ``the" M\"uller-Israel-Stewart theory. This will not cause confusion because after our introductory discussion we will consider a specific choice of equations of motion, see Definition \ref{D:MIS_equations}.
%}. 
More precisely, we consider the version of these
equations where the only viscous effects are due to bulk viscosity, as presented in 
\cite{BemficaDisconziNoronha_IS_bulk}. Our choice is motivated by several reasons.
First, the M\"uller-Israel-Stewart equations have been very successful in the construction of phenomenological
models of the quark-gluon plasma. 
With the help of sophisticated numerical simulations (see, e.g., \cite{Ryu:2017qzn}), these models
are able to reproduce, to a great degree of accuracy, many of the experimentally
observed properties of the quark-gluon plasma \cite{Romatschke:2017ejr}.
Because of this, they are currently the most
used equations in the study of viscous effects in relativistic fluids \cite{Romatschke:2017ejr}. 
Second, the M\"uller-Israel-Stewart equations have been shown to be
stable \cite{Hiscock_Lindblom_stability_1983,Olson:1989ey} and, under natural physical
assumptions, to be causal \cite{DisconziBemficaHoangNoronhaRadoszNonlinearConstraints}. Finally,
for the particular case when only bulk viscosity is present among dissipative effects, 
the Cauchy problem for the M\"uller-Israel-Stewart equations has been shown to be 
locally well-posed in Sobolev and smooth spaces under some 
natural assumptions \cite{BemficaDisconziNoronha_IS_bulk}.

A natural question that follows from the considerations in the previous paragraph is
whether, when only bulk viscosity is present,
a solution to the Cauchy problem persists or breaks down in finite time. 
Although the study of breakdown of solutions has a long history in mathematics and physics
(see, e.g., the monographs
\cite{SpeckBook,
BressanBookConservationLaws-2000,
RezzollaZanottiBookRelHydro, 
Courant_and_Hilbert_book_2}), and is an active field of research in the case of relativistic Euler equations \cite{ChristodoulouShocks,ChristodoulouShockDevelopment,DisconziSpeckRelEulerNull},
to the best of our knowledge, this important question
has not been investigated for the M\"uller-Israel-Stewart equations.
%\footnote{
To be more precise,
shock singularities for the M\"uller-Israel-Stewart equations were studied in \cite{GerochLindblomCausal,HiscockShocksMIS}. 
These works, however, \emph{assume} that solutions break down due the formation of a shock
singularity, and then proceed to study whether solutions can be continued in a weak sense past the shock.
No argument is given in \cite{GerochLindblomCausal,HiscockShocksMIS}
to show that solutions in fact break down in finite time.
For the study of shocks in a different theory of relativistic viscous fluids, see \cite{TempleViscous}.
In the present manuscript, we provide an answer to this question. More precisely, we establish the following:

\begin{quotation}
\textbf{Main result (see Theorem \ref{T:Main_theorem_blowup_simpler} for a precise statement):}
Consider the M\"uller-Israel-Stewart equations and assume that 
the only dissipative contribution is given by bulk viscosity; see Definition \ref{D:MIS_equations}
for the equations of motion. We show that there exists a class of smooth initial data for which 
the corresponding solutions to the Cauchy problem break down in finite time.
More precisely, there exists a $T_0>0$ such that solutions cannot be continued as $C^1$ solutions
all the way up to $t=T_0$ or become unphysical for $t=T_0$. ``Unphysical" here  {means that solutions
become acausal. Causality is understood in the usual sense
of relativity theory, see Definition \ref{D:Causality}, whereas
the notion of physical solutions is introduced in Definition \ref{D:physical_solution_of_MIS}.}
The set of initial data that we construct consists of perturbations of constant states in the sense
that they have constant density, constant baryon number, constant velocity, and zero bulk viscosity
outside a ball of fixed radius.
\end{quotation}

The perturbations we construct are large perturbations of constant states localized in small regions.
It is a  natural question to ask what happens in the case of small perturbations. This is a much harder question. Even in the case of non-relativistic ideal fluids, only recently this question has started to be understood outside
symmetry \cite{SpeckLukShocks2dEuler,LukSpeckShock3DEuler-2021-arxiv}, and we are not aware of any
similar result for the relativistic Euler equations, much less the M\"uller-Israel-Stewart equations. 
Under symmetry assumptions, typically much more can be said about the formation of singularities or global existence for many classes
of fluids equations (see \cite{SpecketalOverviewShocks,SpeckSummaryShocks} for an overview
and \cite{Rendall-Stahl-2008} for the case of the Einstein-Euler system), but these results do not directly
apply to the M\"uller-Israel-Stewart equations.  In particular,
there seems to be no global results for small data for the 
M\"uller-Israel-Stewart system (except, of course, for the trivial case of constant
equilibrium states; see Section \ref{S:MIS} for the definition).

%Because our methods are non-constructive, we are not able to get an estimate for the blow-up time $T_0$.
%Intuitively, however, one would expect $T_0$ to be small since the data we consider is large. Such an intuition
%comes from basic energy estimates that establish local well-posedness for first-order quasilinear equations
%within a time of existence roughly inversely proportional to the size of the initial data accompanied by a continuation criteria stating that solutions can be %continued as long as the $L^\infty$ norm of the gradient
%of the solution variables remains bounded. This, however, is only an educated guess for the size of the blow-up time and a proof is not given in this work.

In Section \ref{S:Setting} we provide the set-up and definitions needed to state our results.
Within that Section, we briefly review the basic formalism of theories of relativistic viscous
fluids in Section \ref{S:Overview} and introduce the M\"uller-Israel-Stewart equations
in Section \ref{S:MIS}. The concepts of causality and the weak energy condition are
reviewed in Sections \ref{S:Causality} and \ref{S:Dominant}, respectively.
The concept of admissible solutions, which is important for the statement of our results, is given in 
Section \ref{S:Admissible}. The main results about viscous and perfect fluids are stated in Section \ref{S:Results}, and their
physical and mathematical significance is discussed in Section \ref{S:Significance}. The proof
of the main result is given in Section \ref{S:Proof}. Riemann invariants for the system in $1+1$
dimensions are discussed in Section \ref{S:Riemann}. The connection between the study of Riemann
invariants in $1+1$ dimensions and our main result is expalined in Section \ref{S:Significance}.

\section{Setting and statement of the results}
\label{S:Setting}
In this Section we provide the set-up and definitions needed
to precisely state our main result, and make further comments about the physical and mathematical 
significance of our results.

\subsection{Overview: from perfect to viscous fluids}
\label{S:Overview}
On four-dimensional space-time, a perfect fluid is characterized by its energy-momentum tensor
\begin{align}
T_{\alpha\beta} := \Density u_\alpha u_\beta + p\Proj_{\alpha\beta},
\label{E:Energy_momentum_perfect}
\end{align}
where $u$ is the fluid's (four-)velocity,
which is a future-directed, timelike vector field 
satisfying the normalization condition
\begin{align}
g_{\alpha\beta} u^\alpha u^\beta = -1,
\label{E:Normalization_u}
\end{align}
where $g$ is the space-time metric.
$\Proj$ is the projector onto the space orthogonal to $u$, given
by 
\begin{align}
    \Proj_{\alpha\beta} = g_{\alpha\beta} + u_\alpha u_\beta
    \nonumber
\end{align} 
for $u$ satisfying
\eqref{E:Normalization_u},
 $\Density$ is the (energy) density of the fluid and $p$ its pressure. 
We henceforth assume that the fluid is embedded in the
four-dimensional Minkowski space $\mathbb{R}^{1+3}$. Above 
and throughout we adopt the following:

\begin{notation}
Greek indices run from $0$ to $3$, Latin indices vary from $1$ to $3$, and repeated indices are summed over their range.
Expressions like $z_\alpha$, $w_{\alpha\beta}$, etc., denote the components
of a tensor relative to Cartesian coordinates $\{ x^\alpha \}_{\alpha=0}^3$ in $\mathbb{R}^{1+3}$,
where $t := x^0$ denotes a time coordinate. Indices are raised and lowered with the Minkowski metric $g$,
given in Cartesian coordinates by
\begin{align}
    g = \operatorname{diag}(-1, +1, +1, +1).
    \nonumber
\end{align}
$\R^+$ denotes the open interval $(0, \infty)$ and $B_R = \{ x \in \mathbb{R}^3 : \vert x\vert  < R \}$ the open ball with radius $R$ in three-dimensional space. We work in units where the speed of light in vacuum equals to one.
\end{notation}

A basic postulate of relativity is the \emph{conservation of energy and momentum}, expressed by
\begin{align}
\nabla_\alpha T^{\alpha}_\beta = 0, \label{E:Div_T}
\end{align}
where $\nabla$ is the covariant derivative associated with the metric $g$ (so that $\nabla_\alpha=
\partial_\alpha$ for the Minkowski metric). Equations \eqref{E:Energy_momentum_perfect},
\eqref{E:Normalization_u}, and \eqref{E:Div_T} imply the relativistic Euler equations:
\begin{align}
	u^\alpha \nabla_\alpha \Density + (p+\Density) \nabla_\alpha u^\alpha &= 0,
	\nonumber
	\\
    (\Density+p) u^\al \nabla_\al u^\beta + \Proj^{\beta \gamma} \nabla_\gamma p &= 0.
    \nonumber
\end{align}
In order to close the system, one needs an equation of state. In general, it is known that the pressure can depend on various thermodynamic
quantities. Using the laws of thermodynamics, one can assume without loss of generality that
$p$ is determined by at most two thermodynamic scalars \cite{AnileBook}, which here we will take to be 
the density $\Density$ and the particle number $n$, i.e., $p=p(\Density,n)$. The latter is postulated to be conserved
in the sense that
\begin{align}
	\nabla_\alpha (n u^\alpha) = 0.
	\label{E:Conservation_n_initial}
\end{align}
The particle number can be interpreted, up to a dimensional constant, as the rest mass
density of the fluid. Thus, \eqref{E:Conservation_n_initial} can be thought of as a relativistic generalization
of the conservation of mass in non-relativistic physics, see \cite{RezzollaZanottiBookRelHydro} for details.

The development of theories of relativistic fluids with viscosity seek to modify
\eqref{E:Energy_momentum_perfect}. This is natural since one would like to recover the equations
of a relativistic perfect fluid when the viscous contributions vanish. 
The first attempt in this direction was proposed by Eckart \cite{EckartViscous}, followed by a similar proposal by Landau-Lifshitz \cite{LandauLifshitzFluids}. Including bulk viscous effects alters \eqref{E:Energy_momentum_perfect} to
\begin{align}
T_{\alpha\beta} := \Density u_\alpha u_\beta + (p+\Pi)\Proj_{\alpha\beta},
\label{E:Energy_momentum}
\end{align}
where $\Pi$ is the bulk viscosity, which encodes viscous contribution to the pressure.
We remark that here we restrict ourselves to discuss the generalization of \eqref{E:Energy_momentum_perfect}
to viscous theories of bulk viscosity. Other dissipative effects, such as shear viscosity or heat conduction,
can also be considered; see the above references for a discussion of this more general situation.
In the theories of Eckart and Landau, the bulk viscosity is defined to be  
\begin{align}
	\label{E:Eckart}
    \Pi := - \zeta \nabla_\alpha u^\alpha,
\end{align}
where $\zeta$ is a known function of $\Density$ and $n$. $\zeta$ is called the bulk
viscosity coefficient.
The equations of motion are still given by \eqref{E:Div_T}, with $u$ satisfying \eqref{E:Normalization_u}, and 
possibly the conservation of baryon number \eqref{E:Conservation_n_initial}
if $p=p(\Density,n)$.
%\footnote{
More precisely, equation \eqref{E:Conservation_n_initial} still holds if $p=p(\Density)$ only, but
in this case the conservation of baryon number decouples from the rest of the system
and can be integrated separately.
 Observe that, as in the case of the perfect
fluid, we can project the divergence of the energy-momentum tensor in the directions parallel and perpendicular
to $u$, leading to a closed system of equations.

The choice \eqref{E:Eckart} is motivated by thermodynamic considerations and
leads to a covariant version of the Navier-Stokes equations. Formally, the non-relativistic limit
of the corresponding evolution reduces to the (compressible) Navier-Stokes equations
\cite{Marcelo2017}. However, the equations of motion have a \emph{parabolic character} \cite{PichonViscous} and are thus incompatible with the most basic requirement of relativity, namely, causality,
which requires finite propagation speed \cite{Hiscock_Lindblom_acausality_1987}. Moreover, solutions of the Eckart model can exhibit catastrophic instabilities, see e.g. \cite{Hiscock_Lindblom_stability_1983, Hiscock_Lindblom_instability_1985, Hiscock_Lindblom_pathologies_1988}. 
The attempt to find \emph{hyperbolic} equations of motion that lead to causal and stable theories
has guided much of the research on relativistic viscous fluids. We refer the reader to the previous
references for a discussion on the history of relativistic viscous fluids and the several attempts
to construct causal and stable theories. We will next discuss the 
M\"uller-Israel-Stewart theory that is the focus of this work.

\subsection{The M\"uller-Israel-Stewart theory}\label{S:MIS}
In the M\"uller-Israel-Stewart theory, the viscous contributions are \emph{not} given as a function of the hydrodynamic variables $u$, $\Density$, and $n$ and its derivatives, as it was the case, for example, in Eckart's theory where $\Pi$ is defined by \eqref{E:Eckart}.
Rather, the viscous contributions are taken to be new dynamic variables on their own right.
In the case of a theory with only bulk viscosity, one again starts with 
the energy momentum tensor \eqref{E:Energy_momentum}, 
which is conserved in the sense of \eqref{E:Div_T}, and with $u$ normalized according to \eqref{E:Normalization_u}. But now $\Pi$ is a new dynamic variable that satisfies the equation
\begin{align}\label{E:Eq_Pi_truncated}
\tau_0 u^\al \nabla_\al \Pi + \Pi + \zeta \nabla_\al u^\al = 0,
\end{align}
where $\tau_0$ is the fluid's relaxation time coefficient
or, more precisely, the relaxation time associated with bulk viscosity ($\zeta$ is 
as above, i.e., the fluid's bulk viscosity coefficient). Since we are not
considering shear viscosity or heat flow (for which there would be further relaxation times
associated), we refer to $\tau_0$ simply as the relaxation time coefficient of the fluid.
Both $\zeta$ and $\tau_0$ are known functions of $\Density$ and $n$. More generally,
one could consider them to be functions of $\Pi$ as well, but we will not assume so in this work, as it would make the analysis more cumbersome.

In the original works of M\"uller, Israel, and Stewart \cite{MIS-1, MIS-2, MIS-3, MIS-4, MIS-5}, 
an equation similar to \eqref{E:Eq_Pi_truncated} was adopted, motivated
by both thermodynamic considerations and the desire to construct a causal theory.
Regarding the former, M\"uller, Israel, and Stewart's choices were
used to show that
a suitably defined notion of non-equilibrium entropy is non-decreasing along the flow, i.e., the 
second law of thermodynamics is satisfied.
Regarding the latter, the idea is that causality requires a relaxation mechanism that 
allows the system to relax to equilibrium. Relaxation-type dynamics of the type \eqref{E:Eq_Pi_truncated} has a long tradition in physics, starting from the seminal work of Cattaneo \cite{Cattaneo1958}. In modern approaches to relativistic viscous fluids, equation
\eqref{E:Eq_Pi_truncated} is derived form kinetic theory \cite{Denicol:2012cn} 
(see also \cite{degroot}) or from effective theory arguments \cite{Baier:2007ix}.

Observe that upon setting $\tau_0 = 0$, one recovers Eckart's choice \eqref{E:Eckart}. 
Thus, it is \emph{precisely} the term $\tau_0 u^\al \nabla_\al \Pi$ which makes the equations hyperbolic and ensures finite speed of propagation. We stress, however, that while obtaining a stable
and causal theory was one of the main motivations for the construction of the 
M\"uller-Israel-Stewart theory, it was not until very recently, with the works
\cite{DisconziBemficaHoangNoronhaRadoszNonlinearConstraints, BemficaDisconziNoronha_IS_bulk},
that it was in fact established that the M\"uller-Israel-Stewart theory leads to causal
equations of motion (stability was proved in \cite{Hiscock_Lindblom_stability_1983,Olson:1989ey}).
In fact, a common misconception is that the M\"uller-Israel-Stewart theory was proved to be causal
a long time ago in the works \cite{Hiscock_Lindblom_stability_1983,Olson:1989ey}. These works
only show the causality of the equations linearized around thermodynamic constant equilibrium states.
I.e., they consider the linearization around constant $u$, $\Density$, $n$, and $\Pi=0$
and proceed to show that the resulting \emph{linear equations} are causal. More precisely,
these works consider the equations also with shear viscosity and heat conduction and linearize
the equations around states where not only $\Pi$, but the other dissipative contributions also vanish, and the remaining variables are constant.

%We remark that viscous fluid motion can be considered in a more general (curved) spacetime, in which case $g$ 
%assumes a more general form, or the fluid motion can be coupled to Einstein's equation, i.e. $g$ is not given but 
%has to be determined as a solution to the coupled system consisting of fluid and Einstein's equations. This will 
%not be done here. 

In this work, instead of \eqref{E:Eq_Pi_truncated}, we will take a slightly more general  
type of relaxation law for $\Pi$, namely
\begin{align}
\tau_0 u^\al \nabla_\al \Pi + \Pi +\la \Pi^2 +\zeta \nabla_\al u^\al = 0,
\label{E:Eq_Bulk_lambda}
\end{align}
where $\lambda$ is a transport coefficient associated with the nonlinear behavior of $\Pi$,
which is a known function of $\Density$ and $n$. Equation \eqref{E:Eq_Bulk_lambda}
obviously reduces to \eqref{E:Eq_Pi_truncated} when $\lambda=0$ and our result
also covers this case.
The inclusion of $\lambda$
is motivated by kinetic theory, because derivations from kinetic theory lead
to a form of the equations more general than originally proposed by M\"uller, Israel, and Stewart, including
the presence of the term $\la \Pi^2$, see \cite{Denicol:2012cn}. 
More generally, such arguments
from kinetic theory suggest also the inclusion of the term $\zeta \Pi \nabla_\alpha u^\alpha$ 
on the LHS of \eqref{E:Eq_Bulk_lambda}. We do not consider such a term here for simplicity,
as the proof for \eqref{E:Eq_Pi_truncated} and \eqref{E:Eq_Bulk_lambda} is essentially the same,
whereas including $\zeta \Pi \nabla_\alpha u^\alpha$ would require further conditions and analysis.

\subsection{The equations of motion}
We are now ready to summarize the equations of motion to be studied. Once again, we consider
the projection of the energy-momentum tensor \eqref{E:Energy_momentum} onto the directions parallel and perpendicular to $u$. We have:
\begin{align}
    &u^\al \nabla_\al \Density + (\Density+p+\Pi) \nabla_\al u^\al = 0, \label{E:Conservation_energy}\\
    &(\Density + p+ \Pi) u^\beta \nabla_\beta u_\al + \Proj_\al^\beta \nabla_\beta(p+\Pi) = 0,
    \label{E:Conservation_momentum}\\
    &u^\al \nabla_\al n + n \nabla_\al u^\al = 0,\label{E:Conservation_n}\\
    &\tau_0 u^\al \nabla_\al \Pi + \Pi +\lambda \Pi^2 +\zeta \nabla_\al u^\al = 0. \label{E:Bulk_equation}
\end{align}
For the reader's convenience, we recall here the character of the several quantities 
appearing in \eqref{E:Conservation_energy}-\eqref{E:Bulk_equation}. 
The quantities $\Density$, $n$,  and $\Pi$ are real-valued functions defined 
in $\mathbb{R}^{1+3}$ or a subset of it (e.g., $[0,T_0)\times \mathbb{R}^3$ for some $T_0>0$).
The (four-)velocity $u$ is a vector field on $\mathbb{R}^{1+3}$ (or a subset of it) that
is timelike, future directed, and normalized by \eqref{E:Normalization_u}.
The pressure $p$, the bulk viscosity coefficient$\zeta$, the relaxation time $\tau_0$, and the transport
coefficient $\lambda$ are known functions of $\Density$ and $n$. In particular, 
the relation $p = p(\Density,n)$ is called an equation of state. 

\begin{definition}[The M\"uller-Israel-Stewart equations]
\label{D:MIS_equations}
The M\"uller-Israel-Stewart equations 
with bulk viscosity and no shear viscosity nor heat flow, hereafter referred to simply as the
M\"uller-Israel-Stewart equations, are given by 
\eqref{E:Conservation_energy} -- \eqref{E:Bulk_equation}. 
%\eqref{E:Normalization_u}.
\end{definition}

Note also that the M\"uller-Israel-Stewart equations in the form \eqref{E:Conservation_energy}
-- \eqref{E:Bulk_equation} are \emph{not} a system of conservation laws.

We have not listed \eqref{E:Normalization_u} among the 
equations of motion because such normalization is better understood 
as a constraint that is propagated by the flow, i.e., a condition that 
holds for $t>0$ if it holds initially. This can be seen
by contracting \eqref{E:Conservation_momentum} with $u^\alpha$. We remark that we will
only consider data for which the normalization \eqref{E:Normalization_u} holds, 
see Definition \ref{D:Initial_data}.

The compressible Navier-Stokes equations are recovered as a suitable (formal) limit of the 
M\"uller-Israel-Stewart equations, as follows. First, one considers the limit of small gradients and small deviations from equilibrium, wherein
$\nabla_\alpha \Pi \sim 0$ and $\Pi^2 \sim 0$. In this situation, we drop these
quantities from \eqref{E:Bulk_equation}. Then $\Pi$ is given by \eqref{E:Eckart}, i.e., one recovers
the Eckart theory. The non-relativistic limit then produces the compressible Navier-Stokes equations,
as already mentioned.

From the point of view of the Cauchy problem, we are given the values of $\Density$, $n$,
$u$, and $\Pi$ on a Cauchy surface of $\mathbb{R}^{1+3}$, which for simplicity we take
to be $\Sigma_0 := \{ t = 0\}$. In view \eqref{E:Normalization_u}, is suffices to prescribe
as data the components of $u$ tangent to $\Sigma_0$. This leads to the following
definition.

\begin{definition}[Initial-data sets for the  M\"uller-Israel-Stewart equations]
\label{D:Initial_data}
An initial-data set
for the M\"uller-Israel-Stewart equations consists of real-valued functions
$\mathring{\Density}, \mathring{n}, \mathring{\Pi} \colon \Sigma_0 \rightarrow \mathbb{R}$
and a vectorfield $\mathring{\mathbf{u}} \colon \Sigma_0 \rightarrow \mathbb{R}^3$.
We denote by $\mathring{u} \colon \Sigma_0 \rightarrow \mathbb{R}^4$ the
initial four-velocity determined from $\mathring{\mathbf{u}}$ with the help of \eqref{E:Normalization_u}.
\end{definition}

Given an initial-data set, one then seeks to determine functions $\Density$, $n$, $\Pi$, and a vector field $u$ that satisfy the
M\"uller-Israel-Stewart equations in a neighborhood of $\Sigma_0$ and take the given
initial data on $\Sigma_0$. To be more precise, for $u$ we require that its projection
onto the tangent space of $\Sigma_0$ agrees with $\mathring{\mathbf{u}}$.
Furthermore, in view of the preceding discussion, we are interested in finding solutions
that are causal (see Definition \ref{D:Causality} for the precise definition of causality).

Under suitable hypotheses on the data, \emph{satisfied by the initial data in 
Theorem \ref{T:Main_theorem_blowup_simpler},} the local existence,
uniqueness, and causality of solutions to the Cauchy problem
for the M\"uller-Israel-Stewart equations has been established in \cite{BemficaDisconziNoronha_IS_bulk}. 
 
\subsection{Causality\label{S:Causality}} 
While the finite speed of propagation property is usually automatic for hyperbolic equations of motion, the requirement of causality places additional constraints on the fluid description, as one
wants not only that information propagates at finite speed but also that the speeds of propagation
are at most the speed of light. To be more precise, causality means the following.

\begin{definition}[Causality]
\label{D:Causality}
Consider in $\mathbb{R}^{1+3}$ a system of partial differential equations for an unknown $\psi$, which
we write as $\mathfrak{P} \psi = 0$, where $\mathfrak{P}$ is a differential operator which is allowed to depend
on $\psi$. Let $\Sigma$ be a Cauchy surface, $\varphi_0$ Cauchy data for $\mathfrak{P} \psi = 0$ given
along $\Sigma$, and $\varphi$ a solution to the corresponding Cauchy problem defined in a neighborhood
$\mathcal{O}$ of $\Sigma$. We say that the system is causal if for any $P \in \mathcal{O}$ in the future
of $\Sigma$, $\varphi(P)$ depends only on the Cauchy data on $J^-(P) \cap \Sigma$, where
$J^-(P)$ is the past-directed light cone with apex $P$ (see Figure \ref{fig1}).
\end{definition}

We have given the definition of causality that suffices to our purposes, i.e., for equations in Minkowski space,
but this can be generalized to arbitrary globally hyperbolic space-times, see, e.g., \cite{HawkingEllisBook}.
We refer the reader to \cite{DisconziBemficaNoronhaConformal} for further discussion on causality
of relativistic viscous fluid theories. For the physical importance of causality and its relation to global
hyperbolicity, see \cite{Witten:2019qhl}. 

\begin{figure}[h]
\begin{center}
\includegraphics[width=0.6\textwidth]{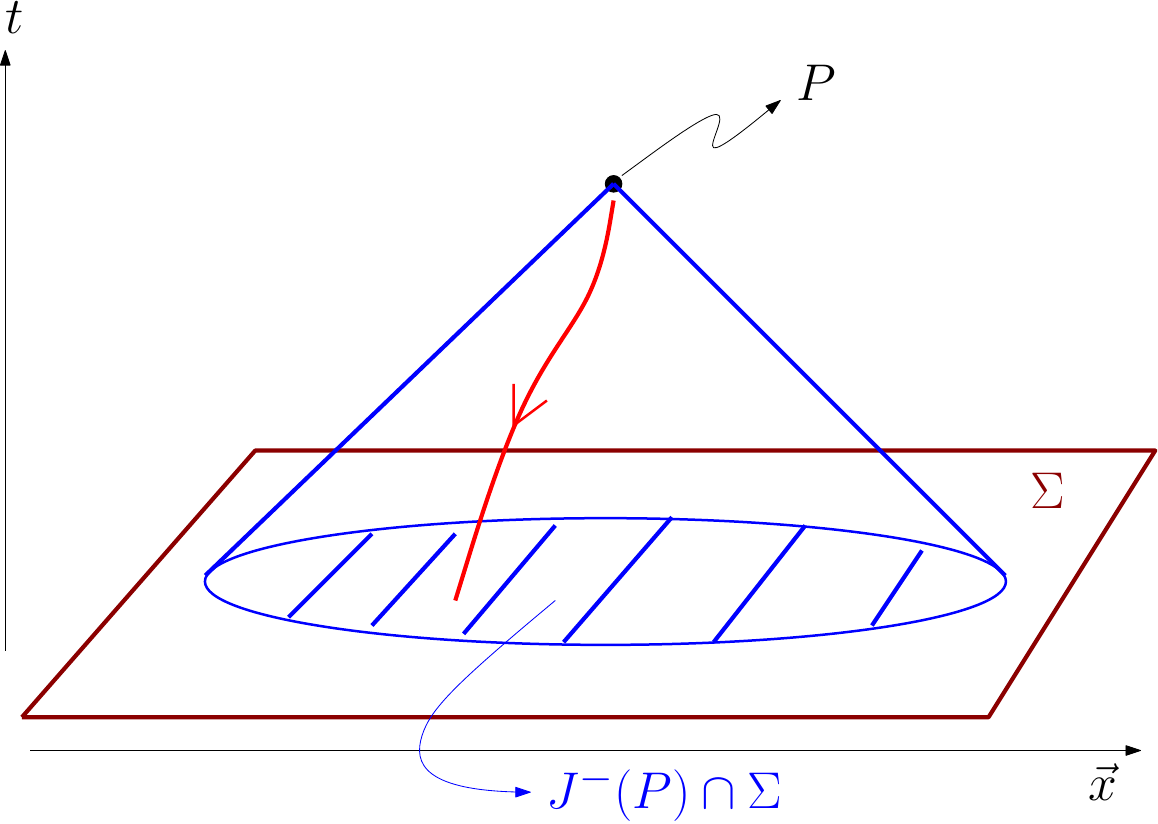}
\caption{Illustration of causality.  $J^-(P)$ 
is the past light-cone with vertex at $P$.
Points inside $J^-(P)$ can be joined to a point $P$ in space-time by a causal past directed curve (e.g. the red line). 
The value of solutions at $P$ depends only on the data on $J^-(P) \cap \Sigma$.
}
\label{fig1}
\end{center}
\end{figure}

Causality is intrinsically related to the characteristics of the equations of motion. The characteristics
of the M\"uller-Israel-Stewart equations were computed in \cite{BemficaDisconziNoronha_IS_bulk}. Aside
from  the flow lines of $u$, the characteristics of the M\"uller-Israel-Stewart equations
correspond to null-hypersurfaces of an acoustical metric \cite{DisconziSpeckRelEulerNull}
with speed
\begin{align}
\label{eq_sound_speed}
c_s^2 = c_s^2(\Density,n,\Pi) := \frac{\zeta}{\tau_0 (\Density+p+\Pi)} + \del_{\Density} p + \frac{n \del_n p}{\Density+p+\Pi},
\end{align} 
whenever the right-hand side is non-negative. 
A necessary and sufficient condition for causality is simply $c_s^2 \leq 1$ (see \cite{BemficaDisconziNoronha_IS_bulk}).

Observe that $c_s^2$ reduces to the sound speed of a perfect fluid when $\zeta=0=\Pi$. Thus, 
\eqref{eq_sound_speed} should be viewed as a generalization of the sound speed in the
presence of viscous effects. On the other hand, equation \eqref{eq_sound_speed} also shows that viscous
effects directly contribute to the system's characteristics and, therefore, viscous effects cannot
be viewed as a perturbation of the perfect fluid case, even when $\zeta$ and $\Pi$ are small.
This is already clear from equations \eqref{E:Conservation_energy}-\eqref{E:Bulk_equation} in that
$\Pi$ contributes to the principal part of the system. 

 {\subsection{Energy condition\label{S:Dominant}}
The weak energy condition plays a role in our work, thus we recall its definition here.
\begin{definition}[Weak energy condition]
\label{D:WeakEnergy}
The energy-momentum tensor \eqref{E:Energy_momentum} is said to satisfy the
weak energy condition if $T^{\alpha\beta} v_\alpha v_\beta\geq 0$ for any timelike co-vector $(v_\al)$. We say that a solution $(\Density,n,\Pi,u)$ to the M\"uller-Israel-Stewart equations satisfies the weak energy condition if the corresponding 
energy-momentum tensor \eqref{E:Energy_momentum} satisfies the weak energy condition.
\end{definition}
For the energy-momentum tensor \eqref{E:Energy_momentum}, the weak energy condition is equivalent to the inequality
\begin{align}
    \Density+p+\Pi \geq 0;
\end{align}
a very sensible requirement in view of the momentum equation \eqref{E:Conservation_momentum}. Since $\rho+p+\Pi$ determines the inertia of individual fluid elements, $\rho+p+\Pi < 0$ would correspond to a negative inertia. We will show below that the weak energy condition is propagated by our viscous fluid equations, provided it is satisfied at $t=0$.}

\subsection{Admissible solutions}\label{S:Admissible} In this Section we introduce a class of solutions that satisfy some basic physical and mathematical requirements forming what we will take as \emph{admissible} solutions. 

Our admissible solutions are essentially those that have
enough regularity, are causal  {and for which the pressure is bounded from below by a negative constant}.  It will be implicit that constitutive relations $p =p(\Density,n)$,
$\zeta = \zeta(\Density,n)$, $\tau_0 = \tau_0(\Density,n)$, and $\lambda = \lambda(\Density,n)$
are given functions whenever definitions involve these quantities.

 {\begin{definition}[Physical states]
\label{D:Physical_states}
The set of physical states $\mathcal{P}$ for the M\"uller-Israel-Stewart theory 
is the set of $(\Density, n, \Pi)\in \mathbb{R}^3$ satisfying the conditions
\begin{enumerate}
    \item $\Density > 0, n > 0$ (positive energy density and number density)
    \item  $0 < c_s^2(\Density, n, \Pi) < 1$ (strict causality)
\end{enumerate}
\end{definition}}

\begin{remark}
Above, and in much of what follows, we will consider defining properties where strict inequality holds.
We discuss this choice in Section \ref{S:Significance}.
\end{remark}

Next, we define an admissible solution as follows.

\begin{definition}\label{D:physical_solution_of_MIS}
A solution $(\Density, n, \Pi, u) \in (C^1([0, T_0)\times \R^3))^{7}$ 
to the M\"uller-Israel-Stewart equations is called \emph{admissible} if states are physical in the sense of definition \ref{D:Physical_states}, i.e., if
\begin{align}
    (\Density(t, x), n(t, x), \Pi(t, x))\in \mathcal{P}
\end{align}
holds for all $(t, x)\in [0, T_0)\times \R^3$. An $C^1$-initial data set $(\mathring{\Density}, \mathring{n}, \mathring{\Pi}, \mathring{\mathbf{u}})$ is called admissible if $(\mathring{\Density}(x), \mathring{n}( x), \mathring{\Pi}(x))\in \mathcal{P}$ for all $x\in \mathbb{R}^3$.
\end{definition}

Next, we state our  assumptions on the constitutive functions $p, \zeta, \tau_0$, and $\lambda$.

\begin{assumption}[Assumptions on $p$, $\zeta$, $\tau_0$, and $\lambda$]
\label{A:Assumptions_on_pressure_and_viscosity} 
We consider constitutive relations for the pressure $p(\Density, n)$, bulk viscosity coefficient 
$\zeta(\Density, n)$, relaxation time coefficient 
$\tau_0(\Density, n)$, and the transport coefficient $\lambda(\Density,n)$ that satisfy:
 {
\begin{enumerate}
    \item[(A1)] The functions $p, \zeta, \tau_0$ are defined on $\R^+\times \R^+$, are
    smooth, and admit smooth extensions to $(\Density, n)\in \R\times \R^+$. Moreover, there exist constants $p_0, p_1\geq 0$ such that
    \begin{align}
        &-\Density \leq p(\Density, n) \leq \Density+p_1\\
        &-p_0 < p(\Density, n)
    \end{align}
    for all $(\Density, n)\in \mathcal{P}$. 
    \item[(A2)] $p(\Density, n)$ is globally Lipschitz on $\R\times\R^+$ and
    \begin{align}
        \del_\Density p(\Density, n) \neq 0, ~~\del_n p(\Density, n) \neq 0
    \end{align}
    for $(\Density, n)\in \mathcal{P}$.
    \item[(A3)] The functions $\zeta, \tau_0 : \R\times\R^+ \to \R$ are smooth, 
	$\zeta \geq 0$, and $\tau_0>c > 0$ for some $c>0$. 
     In addition, 
\begin{align}
        \sup_{(\Density, n)\in \R\times\R^+}\left[\left\vert\del_n \left(\frac{\zeta(\Density, n)}{\tau_0(\Density, n)}\right)\right\vert+\left\vert\del_\Density\left( \frac{\zeta(\Density, n)}{\tau_0(\Density, n)}\right)\right\vert\right] < \infty
\end{align}  
    holds. Moreover, 
    \begin{align}
        \del_\Density \left(\frac{\zeta}{\tau_0}\right)  \geq 0 
    \end{align}
    for all $(\Density, n)\in \R\times \R^+$.
    \item[(A4)] We have
    \begin{align}\label{eq_assumption_for_zeta_both_rho_and_n}
        &\int_{0}^{\infty} \frac{1}{n}\sup_{\Density \in\R}\left\vert\frac{\zeta(\Density, n)}{\tau_0(\Density, n)}\right\vert~dn < \infty. 
     \end{align}   
    \item[(A5)] $\lambda: \R\times \R^+\to \R$ is either a smooth positive function ($\lambda(\Density, n) > 0$) with 
    \begin{align}
        p(\Density,n)+\Density < \frac{1}{\lambda(\Density, n)}.
    \end{align}
    for all $(\Density, n)\in \mathcal{P}$, or $\lambda(\Density, n) = 0$ for all $(\Density, n)$.
\end{enumerate}}
\end{assumption}

\begin{remark}[Assumption \ref{A:Assumptions_on_pressure_and_viscosity} is not empty]\label{remark2}
We remark that there exist functions $p$, $\zeta$, $\tau_0$, and $\lambda$ satisfying
Assumption \ref{A:Assumptions_on_pressure_and_viscosity}. As a simple example, take $\tau_0 > 0$ to be a constant and $\zeta = \zeta(n)$ such that
\begin{align*}
    0 < \zeta(n) \leq C n^{\delta_1} \text{~~~for small $n$}\\
    0 < \zeta(n) \leq C n^{-\delta_2} \text{~~~for large $n$} 
\end{align*}
for some $\delta_1, \delta_2 > 0$.
%We make further comments on Assumption \ref{A:Assumptions_on_pressure_and_viscosity} in Section \ref{S:Significance}.
 It might seem unusual to require that the pressure can be extended to a function defined for all values $(\Density, n)\in \R\times \R^+$, including negative densities. This poses no real problems and is done for a clearer presentation. A crucial ingredient in our proof of breakdown of smooth solutions is the construction of an a-priori bound for the bulk viscous pressure $\Pi$. To keep this construction transparent, we assume that $p$ and the constitutive functions $\zeta, \tau_0$ are extendable to the domain $(\Density,n)\in \R\times\R^+$. To show how this extension works in practice, we consider the ideal gas equation of state (see \cite{RezzollaZanottiBookRelHydro})
    \begin{align}\label{E:ideal_gas_1}
        p(\epsilon, n) = m n \epsilon (\gamma_{\text{ad}} - 1) 
    \end{align}
    where $m$ is the mass per particle and $\epsilon$ is the \emph{specific internal energy} satisfying
    \begin{align}\label{E:specific_internal_energy}
        \Density = m n (1 + \epsilon).
    \end{align}
$\gamma_{\text{ad}} > 1$ is the adiabatic index of the fluid. Combining \eqref{E:ideal_gas_1} and \eqref{E:specific_internal_energy}, we arrive at the pressure 
    \begin{align}
        p(\Density, n) = (\gamma_{\text{ad}} - 1) (\Density - m n),
    \end{align}
    which can clearly be regarded as a function on $(\Density, n)\in \R\times\R^+$. Note that $(\Density, n)\in \mathcal{P}$ requires $\Density > m n$. 

\end{remark}

\subsection{Statement of the results} 
\label{S:Results}
We are now ready to state our results. Our main Theorem states that there exists initial data for which the corresponding solutions break down in finite time.
\begin{theorem}[Breakdown of admissible solutions to the M\"uller-Israel-Stewart equations]
\label{T:Main_theorem_blowup_simpler}
Consider the M\"uller-Israel-Stewart equations and suppose that Assumption
\ref{A:Assumptions_on_pressure_and_viscosity} holds.  
Let $R_0>0$, $\bar{\Density}>0$, and $\bar{n} >0$ be constants and let 
$c := c_s(\bar \Density, \bar n, 0) $. Assume moreover $0<c < 1$. 
Then, there exists smooth admissible initial data
$(\mathring{\Density},\mathring{n},\mathring{\Pi},\mathring{\mathbf{u}})$
for the M\"uller-Israel-Stewart equations with the following properties.

1)  $(\mathring{\Density},\mathring{n},\mathring{\Pi},\mathring{\mathbf{u}}) = 
(\bar{\Density},\bar{n},0,0)$ outside $B_{R_0}$.

2) There exists a $T_0>0$ and a unique smooth admissible solution
$(\Density,n,\Pi,u)$ to the M\"uller-Israel-Stewart equations defined for $t \in [0,T_0)$ and
taking the data $(\mathring{\Density},\mathring{n},\mathring{\Pi},\mathring{\mathbf{u}})$.
For each $0\leq t < T_0$, this solution satisfies 
\begin{align}
    \Density(t, x) = \bar \Density, ~n(t, x) = \bar n, ~\Pi(t, x) = 0, ~\mathbf{u}(t, x) = 0.  
      \nonumber
\end{align}
outside a ball of radius $R(t) = R_0 + c t$  {and the \emph{weak energy condition} holds for all $(t, x)\in [0, T_0)\times \R^3$, i.e.
\begin{align}
    \Density+p+\Pi \geq 0. 
\end{align}}

3) The solution $(\Density,n,\Pi,u)$  cannot be continued as a $C^1$ admissible solution
past $t=T_0$.  {More precisely, we have 
\begin{align}\label{E:C1}
\lim_{t\to T^-_0} \left(\vert (n(t,\cdot), \Density(t,\cdot), \Pi(t,\cdot))\vert _{C^1(\R^3)} +  \vert \nabla\mathbf{u}(t,\cdot)\vert _\infty\right)= \infty
\end{align} 
or the values $(\Density, n, \Pi)$ leave any compact set $\Omega$ contained in $\mathcal{P}$.}
\end{theorem}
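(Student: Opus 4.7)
The plan is to reduce to plane symmetry in $1+1$ dimensions and then to run a Lax-type characteristic argument: along a forward acoustic characteristic, an approximate Riemann invariant will develop a large negative derivative satisfying a Riccati inequality that forces blow-up, unless the solution leaves the physical region $\mathcal{P}$ first. I take initial data depending only on $x^1$ and coinciding with $(\bar\Density,\bar n,0,0)$ for $|x^1|\geq R_0$, extended trivially in $x^2,x^3$. Since the M\"uller--Israel--Stewart system preserves plane symmetry and is locally well-posed and causal by \cite{BemficaDisconziNoronha_IS_bulk}, this produces a unique admissible $C^1$ solution on some $[0,T_0)\times\R^3$. Property 1 is automatic, and the finite propagation speed combined with the agreement with the constant state outside $B_{R_0}$ yields the support statement of property 2. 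For the weak energy condition, combining \eqref{E:Conservation_energy} and \eqref{E:Bulk_equation} and using $\zeta\geq 0$ from (A3) produces a transport inequality for $\Density+p+\Pi$ whose zeroth-order source vanishes on $\{\Density+p+\Pi=0\}$; since $\bar\Density+\bar p\geq 0$ by (A1), non-negativity propagates.

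In $1+1$ dimensions the principal part of \eqref{E:Conservation_energy}--\eqref{E:Bulk_equation} is a quasilinear $4\times 4$ system in $(\Density,n,\Pi,u^1)$. Its characteristic speeds are the flow speed (of multiplicity two) and the two acoustic speeds $\lambda_\pm$ built from $c_s^2$ via relativistic addition with the flow velocity; by (A2), (A3) and $0<c<1$, these eigenvalues are real, distinct and strictly subluminal in a neighborhood of $(\bar\Density,\bar n,0,0)$. Diagonalizing the principal part in the right eigenbasis of $\lambda_+$ yields, at leading order, a transport equation along $dx/dt=\lambda_+$ for an approximate Riemann invariant $w=w(\Density,n,\Pi,u^1)$, with a remainder containing the relaxation term $\Pi/\tau_0$, the nonlinear term $\lambda\Pi^2$, and cross-couplings to the other characteristic families.

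Fix a forward acoustic characteristic $X(\cdot;\alpha)$ issued from $(0,\alpha)$ with $\alpha\in(-R_0,R_0)$ and set $y(t):=\partial_{x^1}w(t,X(t;\alpha))$. Differentiating the transport equation for $w$ in $x^1$ and restricting to $X(\cdot;\alpha)$ produces
\begin{equation}
y'(t)=-A(t)\,y(t)^2+B(t)\,y(t)+C(t),
\end{equation}
where $A(t)$ is bounded below by a positive constant $A_0$ (coming from the genuine-nonlinearity coefficient $\partial_w \lambda_+$ at the background) as long as $(\Density,n,\Pi)$ stays in a compact subset of $\mathcal{P}$, and $B,C$ are controlled by $L^\infty$-norms of the approximate invariants for the other characteristic families via the transport bounds from (A3)--(A4). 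I then choose the initial data so that $\partial_{x^1}w(0,\alpha_0)$ is a sufficiently large negative number at some $\alpha_0\in(-R_0,R_0)$ while the remaining data stays close to the constant background. Comparison with the autonomous Riccati ODE $\tilde y'=-\tfrac{A_0}{2}\tilde y^2+M|\tilde y|+M$, where $M$ bounds $\|B\|_\infty+\|C\|_\infty$, forces $y(t)\to-\infty$ at a finite $T_0$. Because $w$ has non-degenerate gradient in the state variables, this delivers either the $C^1$ alternative \eqref{E:C1} or the exit of $(\Density,n,\Pi)$ from every compact subset of $\mathcal{P}$ before $T_0$, which is precisely property 3.

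The main obstacle, foreshadowed by the authors' own negative result on the non-existence of exact Riemann invariants for MIS in $1+1$ dimensions, is to keep $A(t)\geq A_0>0$ along the entire life-span of the characteristic in spite of the cross-coupling with the other characteristic families and with the evolution of $\Pi$ through the relaxation law \eqref{E:Bulk_equation}. Assumptions (A3)--(A5) --- in particular the integrability of $\zeta/\tau_0$ along flow lines from (A4), the sign of $\partial_\Density(\zeta/\tau_0)$ from (A3), and the bound on $\lambda$ from (A5) --- are precisely what is needed to keep the lower-order Riccati coefficients $B,C$ dominated by $A_0 y^2$ for $|y|$ large. The delicate calibration between smallness of the overall perturbation (needed for the approximate Riemann-invariant framework and the background genuine-nonlinearity estimate) and the largeness of $\partial_{x^1}w$ at a single point is the central technical hurdle.
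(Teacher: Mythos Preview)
Your proposal has a structural mismatch with the statement and a genuine gap in the core mechanism.

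\textbf{The plane-symmetric reduction is incompatible with property 1).} Data depending only on $x^1$ and agreeing with $(\bar\Density,\bar n,0,0)$ for $|x^1|\geq R_0$ is \emph{not} equal to the constant state outside the ball $B_{R_0}\subset\R^3$: the point $(0,2R_0,0)$ lies outside $B_{R_0}$ but has $|x^1|=0<R_0$. Likewise the support statement in 2), that the solution equals the constant state outside $B_{R(t)}$, fails for slab-supported data. The theorem genuinely demands a three-dimensional compactly supported perturbation, and the virial/integral quantities in the paper's argument (the conserved energy $E$, the second moment $I$, and $Q=\dot I$) are what exploit this.

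\textbf{The Lax/Riccati step is not justified.} The paper itself proves (Section~\ref{S:Riemann}) that Riemann invariants do \emph{not} exist for this system in $1+1$ dimensions unless $p$ is constant. You acknowledge this but propose ``approximate'' invariants whose cross-coupling terms you plan to absorb into $B,C$. The difficulty is that the coupling is not lower-order: the other characteristic families contribute terms of the \emph{same} differential order to the evolution of $\partial_{x^1}w$, and there is no smallness available to close the loop --- the data must be large somewhere to trigger Riccati blow-up, and that same largeness pollutes the other families. Assumptions (A3)--(A5) do not provide the kind of decoupling you invoke; in the paper they are used for a completely different purpose, namely to solve the auxiliary transport equation $\mathcal{T}F=\zeta/\tau_0$ in state space (Proposition~\ref{prop:solving_TF}) and thereby obtain a uniform $L^\infty$ bound on $\Pi$ (Proposition~\ref{L:estimate_for_Pi}) along flow lines, independent of any characteristic decomposition.

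\textbf{What the paper actually does.} The argument is a three-dimensional virial computation in the spirit of Sideris and Guo--Tahvildar-Zadeh, not a characteristic one. One shows $\ddot I=T+3\int(p+\Pi-\bar p)$ and combines this with the $\Pi$-bound and the algebraic inequality $Q^2\leq R^2(2(E+bR^3)-T)T$ to obtain a differential inequality for $Q=\dot I$ that cannot persist for all time (Lemmas~\ref{lemma:blowupQ}--\ref{lemma:blowupQ1}). The blow-up data have \emph{large velocity amplitude} $\mathring{\mathbf u}=\sigma\mathbf u_1$ with $\sigma$ large, rather than large gradients near the constant state. The entire argument avoids any diagonalization of the principal part.
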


Theorem \ref{T:Main_theorem_blowup_simpler} thus states that $(\Density,n,\Pi,u)$
becomes singular in the sense that it 
cannot be continued as a classical (i.e., $C^1$) solution all the way up to $t=T_0$, or 
$(\Density,n,\Pi,u)$ is well-defined at $t=T_0$ but no longer satisfies the physical conditions
that define an admissible solution. This happens inside the ball of radius $R_0 + c T_0$, whereas
outside this ball the fluid is in equilibrium, see Figure \ref{F:Equilibrium_outside}.

\begin{figure}[h]
\begin{center}
\includegraphics[width=0.6\textwidth]{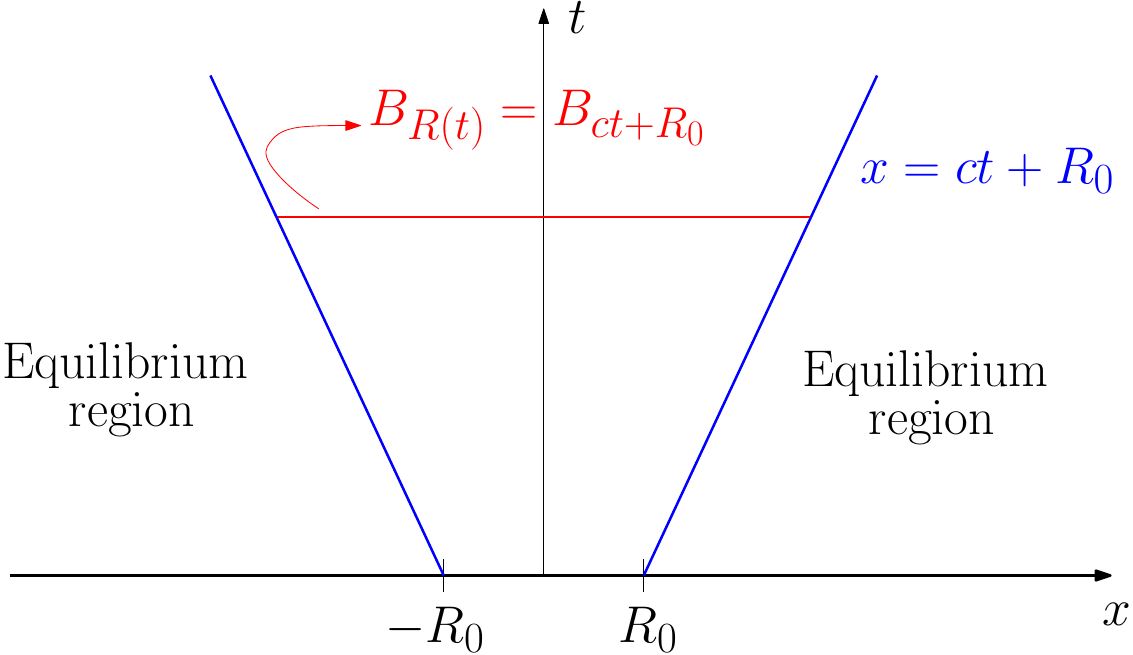}
\caption{Illustration of the space-time regions of Theorem \ref{T:Main_theorem_blowup_simpler}.
For each time $t$, the solution remains in equilibrium outside the ball of radius $R(t) = ct+R_0$.
Our illustration is in $1+1$ dimensions for simplicity, but the result holds in $3+1$ dimensions.
}
\label{F:Equilibrium_outside}
\end{center}
\end{figure}

The only constraint on the constants $R_0$, $\bar{\Density}$, and $\bar{n}$ 
is that they are positive and $c = c_s(\bar \Density, \bar n, 0) < 1$. 
Thus, we obtain a family
of initial data parametrized by $R_0,\bar{\Density},\bar{n}$. 
In particular, $R_0>0$ can be taken very small, so our data is a localized modification of a constant state.
This is the sense in which we consider $(\mathring{\Density},\mathring{n},\mathring{\Pi},\mathring{\mathbf{u}})$
a perturbation of a constant state. We are not, however, claiming that 
$(\mathring{\Density},\mathring{n},\mathring{\Pi},\mathring{\mathbf{u}})$ is close to 
a constant state in the $C^0$ topology. In fact, the initial data we will
construct will have large velocities $\mathring{\mathbf{u}}$.
We illustrate the behavior of our initial data in Figure \ref{F:Initial_data}.

\begin{figure}[h]
\begin{center}
\includegraphics[width=0.6\textwidth]{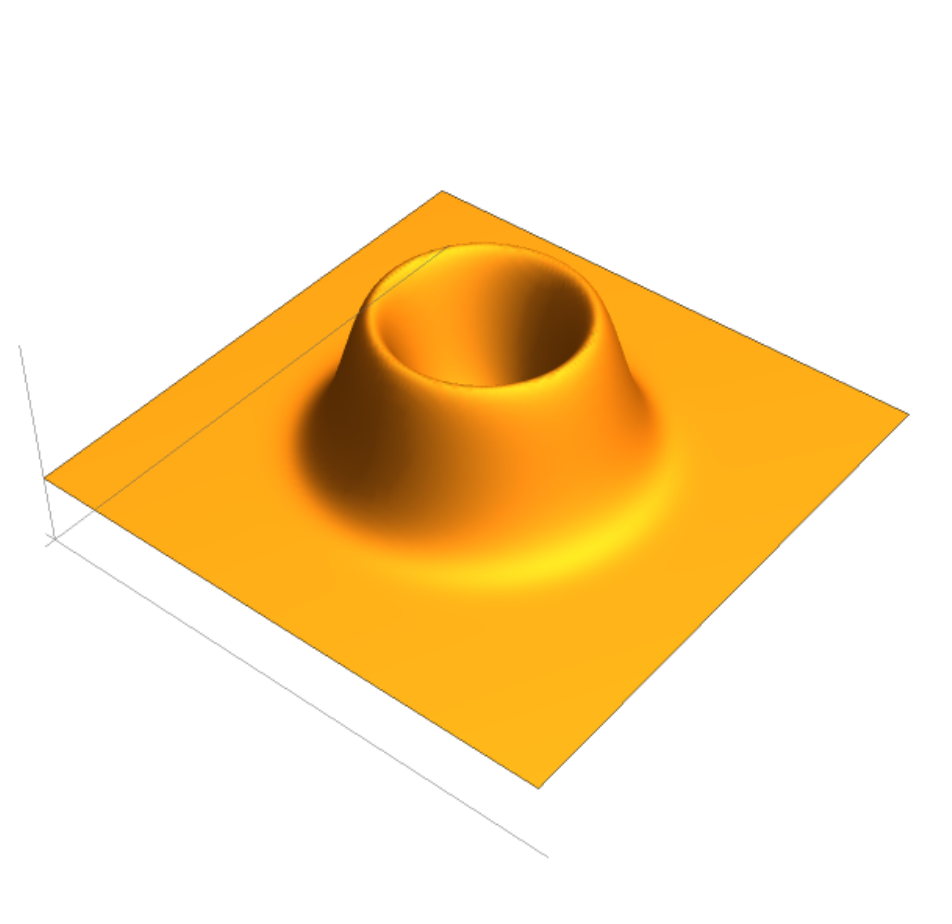}
\caption{Illustration of the initial data in Theorem \ref{T:Main_theorem_blowup_simpler}. The figure illustrates $\vert\mathbf{u}\vert$. The velocity is zero, corresponding to equilibrium, outside the ball of radius $R_0$. Note that the figure shows a radially symmetric configuration for simplicity. The class of initial profiles for $\mathbf{u}$ leading to breakdown is characterized by an integral inequality and does not require any symmetry.
}
\label{F:Initial_data}
\end{center}
\end{figure}

The next Theorem states
that this family of data is stable under perturbations.

\begin{theorem}[Stability of the breakdown of admissible solutions]
\label{T:Theorem_stability}
Suppose that Assumption \ref{A:Assumptions_on_pressure_and_viscosity} holds and 
let $(\mathring{\Density},\mathring{n},\mathring{\Pi},\mathring{\mathbf{u}})$
be initial data for the M\"uller-Israel-Stewart equations given by Theorem \ref{T:Main_theorem_blowup_simpler}.
There exists an $\varepsilon>0$ with the following properties. If 
$(\tilde{\Density},\tilde{n},\tilde{\Pi},\tilde{\mathbf{u}})$ is initial data for the
M\"uller-Israel-Stewart equations such that $\tilde{\Density} = \bar{\Density}$,
$\tilde{n} = \bar{n}$ outside $B_{R_0}$, where $R_0$, $\bar{\Density}$, and $\bar{n}$ are as in
Theorem \ref{T:Main_theorem_blowup_simpler}, and
\begin{align}
\vert(\mathring{\Density},\mathring{n},\mathring{\Pi},\mathring{\mathbf{u}})(x) - 
(\tilde{\Density},\tilde{n},\tilde{\Pi},\tilde{\mathbf{u}})(x) \vert < \varepsilon
\nonumber
\end{align}
for all $x\in \mathbb{R}^3$, then the conclusions of Theorem \ref{T:Main_theorem_blowup_simpler}
hold for $(\tilde{\Density},\tilde{n},\tilde{\Pi},\tilde{\mathbf{u}})$.
\end{theorem}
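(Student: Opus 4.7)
The strategy is to reduce Theorem \ref{T:Theorem_stability} to the openness of the breakdown criterion furnished by the proof of Theorem \ref{T:Main_theorem_blowup_simpler}. Three ingredients combine: local well-posedness with continuous dependence on data for the M\"uller-Israel-Stewart system (available from \cite{BemficaDisconziNoronha_IS_bulk}), finite speed of propagation to preserve (approximate) equilibrium outside the causal cone emanating from $B_{R_0}$, and continuity of the functional on initial data whose strict positivity forces the breakdown.

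First, for $\varepsilon$ sufficiently small, the local well-posedness result produces a unique smooth admissible solution $(\tilde{\Density},\tilde{n},\tilde{\Pi},\tilde{u})$ to the perturbed Cauchy problem on a time interval $[0,\tau]$ whose length is uniform in the perturbation, together with continuous dependence on data in the relevant Sobolev norm. Because $\tilde{\Density}=\bar{\Density}$ and $\tilde{n}=\bar{n}$ outside $B_{R_0}$, and $\mathring{\Pi}\equiv 0\equiv \mathring{\mathbf{u}}$ there, the perturbation has essentially compact support in $\overline{B_{R_0}}$ (up to $\varepsilon$-sized tails in $\tilde{\Pi},\tilde{\mathbf{u}}$); standard smoothing and embedding arguments upgrade the pointwise $C^0$-smallness into smallness in the topology used by \cite{BemficaDisconziNoronha_IS_bulk}. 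Together with causality at the background state ($c_s=c<1$ near $(\bar{\Density},\bar{n},0)$), finite speed of propagation confines the deviation from equilibrium to a slight enlargement of the ball of radius $R_0+ct$, mirroring the geometric structure of conclusion 2 of Theorem \ref{T:Main_theorem_blowup_simpler}.

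Next, from the proof of Theorem \ref{T:Main_theorem_blowup_simpler} I would extract the explicit functional $\mathcal{F}$ on initial data (the integral inequality alluded to in the caption of Figure \ref{F:Initial_data}) whose strict positivity drives the Riccati-type finite-time breakdown along a characteristic. The data built in Theorem \ref{T:Main_theorem_blowup_simpler} satisfy $\mathcal{F}>0$ with strict inequality, and $\mathcal{F}$ is a continuous functional of $(\mathring{\Density},\mathring{n},\mathring{\Pi},\mathring{\mathbf{u}})$ on the compact set $\overline{B_{R_0}}$ in the appropriate norm. Hence, for $\varepsilon$ small enough, $\mathcal{F}[(\tilde{\Density},\tilde{n},\tilde{\Pi},\tilde{\mathbf{u}})]>0$ as well, and reapplying the argument of Theorem \ref{T:Main_theorem_blowup_simpler} verbatim to the perturbed data produces a breakdown time $\tilde{T}_0$ close to $T_0$ at which either \eqref{E:C1} holds for the perturbed solution or $(\tilde{\Density},\tilde{n},\tilde{\Pi})$ leaves every compact subset of $\mathcal{P}$.

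The main obstacle I anticipate is reconciling the pointwise $C^0$-smallness hypothesis with the regularity required for continuity of $\mathcal{F}$. If $\mathcal{F}$ depends on first derivatives of $\mathring{\mathbf{u}}$, then $C^0$-closeness is by itself insufficient; I would address this by restricting to admissible perturbations that are also uniformly bounded in $C^1(\overline{B_{R_0}})$ (natural in view of Definition \ref{D:physical_solution_of_MIS}) and invoking a standard interpolation argument to convert $C^0$-smallness into $C^1$-smallness with only a small loss, thereby preserving the sign of $\mathcal{F}$. A second, subtler point is that the statement does not impose $\tilde{\Pi}=0$ and $\tilde{\mathbf{u}}=0$ outside $B_{R_0}$, only $C^0$-smallness there; handling these $\varepsilon$-sized outer tails requires perturbing the causal-cone argument so that the breakdown is localized inside a slightly enlarged ball rather than on the nose, which is a routine but bookkeeping-heavy adaptation.
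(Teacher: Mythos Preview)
Your core strategy is the same as the paper's: the breakdown criterion established in the proof of Theorem \ref{T:Main_theorem_blowup_simpler} (via Theorem \ref{T:Main_theorem_blowup}) consists entirely of strict inequalities that depend continuously on the initial data, so small $C^0$ perturbations preserve them. The paper's proof is in fact a single sentence pointing to Remark \ref{R:Open_data}.

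Two points in your write-up should be corrected. First, your anticipated ``main obstacle'' about derivative dependence does not arise: the functional in question is the integral inequality \eqref{conditions_blowup23}, and the auxiliary conditions \eqref{lem_Q:blowup_cond1}--\eqref{lem_Q:blowup_cond3} in Lemmas \ref{lemma:blowupQ}--\ref{lemma:blowupQ1} involve only $E(0)$, $Q(0)$, and $\|\mathring{\Pi}\|_\infty$. All of these are continuous in the $C^0$ topology on $\overline{B_{R_0}}$, so no interpolation or $C^1$ control is needed. Second, the breakdown mechanism is \emph{not} Riccati-type along a characteristic; the paper explicitly avoids Riccati inequalities (see Section \ref{S:Significance}). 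The argument instead bounds the virial quantity $Q(t)$ via Lemma \ref{lemma:blowupQ} using the a-priori estimate on $\Pi$ from Proposition \ref{L:estimate_for_Pi}.

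Your second concern, that the statement allows $\tilde{\Pi}$ and $\tilde{\mathbf{u}}$ to be merely $\varepsilon$-small rather than zero outside $B_{R_0}$, is a legitimate reading of the hypotheses; the paper's one-line proof does not explicitly address it. If one reads the theorem as you do, your suggested fix (enlarging the localization ball slightly) is the natural one.
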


The main idea behind Theorem \ref{T:Main_theorem_blowup_simpler} is the following. The breakdown is driven by the ideal component and the main task then is to control the evolution of $\Pi$. The breakdown of the ideal part, in turn, follows the basic philosophy of the previous work \cite{Shadi_YanGuo1999}, although our proof is more refined, allowing us to treat all values of the sound speed. Hence our techniques can also be employed to obtain a new breakdown result for the relativistic Euler equations, which we recall are
given by
\begin{align}
    &u^\al \nabla_\al \Density + (\Density+p) \nabla_\al u^\al = 0, \label{Ep:Conservation_energy}\\
    &(\Density + p) u^\beta \nabla_\beta u_\al + \Proj_\al^\beta \nabla_\beta p = 0,\label{Ep:Conservation_momentum}\\
    &u^\al \nabla_\al n + n \nabla_\al u^\al = 0.\label{Ep:Conservation_n}
\end{align}
For the relativistic Euler equations, the sound speed is given by
\begin{align}
\label{eq_sound_speed_perfect}
c_{\text{Euler}}^2 = c_{\text{Euler}}^2(\Density,n) := \del_{\Density} p + \frac{n \del_n p}{\Density+p}.
\end{align} 
With \eqref{eq_sound_speed_perfect} we can define admissible solutions for the relativistic Euler equations in the same way as in Definition \ref{D:physical_solution_of_MIS}.

\begin{theorem}[Breakdown of admissible solutions for the relativistic Euler equations]
\label{T:Main_theorem_Euler}
Consider the relativistic Euler equations and assume that the equation of state $p=p(\Density,n)$
satisfies the following:
$p$ is a smooth function defined on an open set $\mathcal{P}\subset (\Density, n)\in \R^+\times \R^+$,
$p(\Density, n)$ is globally Lipschitz on $\mathcal{P}$ and satisfies
\begin{align}\label{pressurecondition}
    -\Density \leq p(\Density, n)\leq \Density+p_1
\end{align}
as well as $p(\Density, n)\geq -p_0$ on $\mathcal{P}$ for positive constants $p_0, p_1$. Assume that the perfect sound speed
$c_{s, \text{Euler}}$ satisfies $0 < c_{s, \text{Euler}}^2 < 1$ as well as $\del_\Density p \neq 0, \del_n p\neq 0$ on $\mathcal{P}$.
Let $R_0>0$, $\bar{\Density}>0$, and $\bar{n}>0$ be constants. 

Then, there exists smooth admissible initial data 
$(\mathring{\Density},\mathring{n},\mathring{\mathbf{u}})$
for the relativistic Euler equations satisfying
$(\mathring{\Density},\mathring{n},\mathring{\mathbf{u}}) = (\bar{\Density},\bar{n},0)$
outside $B_{R_0}$ such that the corresponding local solution to the 
Cauchy problem with data $(\mathring{\Density},\mathring{n},\mathring{\mathbf{u}})$
cannot exist for all time as a $C^1$ solution with values $(\Density, n)\in \mathcal{P}$. Moreover, such initial data
is stable in the sense of Theorem \ref{T:Theorem_stability}.
\end{theorem}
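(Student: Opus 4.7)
The plan is to specialize the ``ideal-part'' philosophy used in the proof of Theorem~\ref{T:Main_theorem_blowup_simpler}, following Sideris and the relativistic adaptations of \cite{Shadi_YanGuo1999,pan_smoller_2005}. Since the viscous field $\Pi$ is absent, one bypasses the estimates needed to keep $\Pi$ small, and the argument reduces to a single moment-functional inequality driven by the initial velocity. As a preliminary, local $C^{1}$ well-posedness of the Cauchy problem for relativistic Euler together with strict subluminality of the background sound speed $c:=c_{s,\text{Euler}}(\bar\Density,\bar n)<1$ imply, by finite propagation speed, that on the maximal interval $[0,T_{0})$ on which a $C^{1}$ admissible solution exists one has $(\Density,n,\mathbf{u})\equiv(\bar\Density,\bar n,0)$ outside the ball $B_{R_{0}+ct}$. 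It therefore suffices to force either the $C^{1}$ norm to blow up, or $(\Density,n)$ to leave any compact subset of $\mathcal{P}$, while the solution is still supported in that growing ball.

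I would next introduce a Sideris-type momentum moment,
\begin{align}
F(t):=\int_{\R^{3}} x_{i}\,T^{0i}(t,x)\,dx,
\end{align}
which converges because $T^{0i}=(\Density+p)u^{0}u^{i}$ vanishes outside $B_{R_{0}+ct}$. Differentiating and using $\nabla_{\al}T^{\al i}=0$ together with integration by parts, while carefully accounting for the constant background stress $\bar p\,\delta^{ij}$ at the ``outer'' boundary of the perturbation, one obtains a schematic identity
\begin{align}
\dot F(t)=\int_{B_{R_{0}+ct}}\bigl[(\Density+p)|\mathbf{u}|^{2}+3(p-\bar p)\bigr]\,dx.
\end{align}
In parallel, the conserved excess energy $\int(T^{00}-\bar\Density)\,dx$ and excess particle number $\int(nu^{0}-\bar n)\,dx$ supply the essential a~priori integral bounds.

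The crux is then to combine the coercive term $\int(\Density+p)|\mathbf{u}|^{2}$ with H\"older interpolation against these conservation laws, extracting a Riccati-type inequality of the schematic form
\begin{align}
\dot F(t)\;\ge\;c_{0}\,\frac{F(t)^{2}}{(1+t)^{3}}\;-\;C_{0}(1+t)^{3},
\end{align}
in which the negative remainder, controlled by the lower pressure bound $p\ge-p_{0}$ from \eqref{pressurecondition}, grows at an acceptable polynomial rate. Choosing initial data so that $\mathring{\mathbf{u}}$ is a compactly supported, outward-pointing velocity field with $F(0)$ exceeding the Riccati threshold (while $\mathring\Density,\mathring n$ are only mildly perturbed so as to keep $(\mathring\Density,\mathring n)\in\mathcal{P}$) then forces $F$ to escape every bound compatible with continued $C^{1}$ admissibility, producing the breakdown. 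Stability under the perturbation norm of Theorem~\ref{T:Theorem_stability} will follow from continuous dependence of both $F(0)$ and the Cauchy problem on initial data, applied to the strict inequality $F(0)>\text{(threshold)}$.

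The step I expect to be hardest is producing the Riccati inequality uniformly in the sound speed over the entire subluminal range $0<c_{s,\text{Euler}}^{2}<1$ for a general two-parameter equation of state $p(\Density,n)$. Earlier incarnations of Sideris' argument in the relativistic setting (e.g.\ \cite{pan_smoller_2005}) typically impose polytropic closure or a restriction of the form $c_{s,\text{Euler}}^{2}<1/3$ in order to close the algebra; matching the authors' claim of handling \emph{any} value of the sound speed strictly less than one almost certainly requires a weighted or enthalpy-corrected variant of the moment, perhaps $\int x_{i}(\Density+p)u^{0}u^{i}\,w(|x|,t)\,dx$ with a weight $w$ engineered so that the Riccati feedback survives as $c_{s,\text{Euler}}^{2}\to 1^{-}$. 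Leveraging the global Lipschitz hypothesis on $p$ and the pressure bounds of \eqref{pressurecondition} to absorb the lower-order remainders is, I expect, exactly what the authors mean by describing their proof as more refined than its predecessors.
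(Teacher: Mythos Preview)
Your setup is exactly right: the paper uses the same moment
\begin{align}
Q(t)=\int x_{i}T^{0i}\,dx=\int_{B_{R(t)}}\mathbf{x}\cdot\mathbf{u}\,u^{0}(\Density+p)\,dx
\end{align}
and the same virial identity $\dot Q=\int(\Density+p)|\mathbf u|^{2}+3\int(p-\bar p)$. The difference---and it is precisely the point on which your proposal is incomplete---is that the paper \emph{does not} pass to a Riccati inequality. As you yourself note, the Riccati route is what forces the restriction $c_{s}^{2}<1/3$ in \cite{Shadi_YanGuo1999}, and your suggestion of a weighted moment is not how the authors remove it.

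What the paper does instead is the following. From Cauchy--Schwarz applied to $Q$ one gets the \emph{two-sided} bound $Q^{2}/R^{2}\le\bigl(2(E+bR^{3})-T\bigr)T$, which can be inverted to give
\begin{align}
T\ \ge\ (E+bR^{3})-\sqrt{(E+bR^{3})^{2}-Q^{2}/R^{2}}.
\end{align}
Substituting this lower bound for $T$ into $\dot Q\ge T-kR^{3}$ and then introducing the \emph{normalized} variable $z:=Q/\bigl(R(E+bR^{3})\bigr)\in[0,1]$ yields a separable inequality $\dot z\ge R^{-1}h(z)$ with $h(z)=1-\sqrt{1-z^{2}}-Az-B$, where $A,B$ are dimensionless ratios depending on $E,R,b,k$. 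Scaling the initial velocity by a large factor $\sigma$ sends $E\to\infty$, so $A\to c$ and $B\to 0$; then $h$ has a zero at $z_{0}=2c/(c^{2}+1)<1$ and is strictly positive on $(z_{0},1]$ \emph{for every} $c<1$. An elementary integration of $\dot z\ge h(z)/R$ then forces $z$ to reach $1$ in finite time, contradicting the a~priori bound $|z|\le 1$.

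The essential point you are missing is the normalization $z=Q/\bigl(R(E+bR^{3})\bigr)$: a direct Riccati bound $\dot Q\gtrsim Q^{2}/R^{2}(E+bR^{3})$ has a coefficient that \emph{degenerates} as $\sigma\to\infty$ (since $E\sim\sigma^{2}$), so enlarging the data does not help close it. Passing to $z$ removes that degeneration and replaces the quadratic nonlinearity by $1-\sqrt{1-z^{2}}$, whose structure is exactly what accommodates the full range $0<c<1$. No weighted or enthalpy-corrected moment is needed.
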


Theorem \ref{T:Main_theorem_Euler} should be compared with 
the well-known result of  Guo and Tahvildar-Zadeh \cite{Shadi_YanGuo1999}
(which, in turn, can be viewed as a generalization to the relativistic setting of Sideris'
well-known result \cite{Sideris1984} for the non-relativistic compressible Euler equations).
\cite{Shadi_YanGuo1999} requires the restriction $c_{\text{Euler}}^2(\mathring{\Density},\mathring{n})<1/3$, whereas 
our result applies to the full range $0<c_{\text{Euler}}^2(\mathring{\Density},\mathring{n})<1$. As an example for a typical set $\mathcal{P}$, we consider a baryonic perfect fluid without chemical and nuclear reactions. In this case, $\mathcal{P}$ is defined by the inequalities $\Density > 0, n > 0, \Density > m n$, where $m$ is the rest mass per particle. For an example of an equation of state $p(\Density, n)$, see the discussion of the ideal gas \eqref{E:ideal_gas_1} above.

Finally we remark that the pressure condition \eqref{pressurecondition} in the case of perfect fluids follows from the causality condition $0<c^2_{\text{Euler}}<1$ and some standard thermodynamic assumptions. More specifically suppose that 
%$p$ is defined on a nonempty open set $\mathcal{P}$ of the $\Density>0, n>0$ plane and 
a thermodynamic entropy $S(\Density, n)$ exists for which
\begin{align}
    T dS = d \eps + p d\left(\frac{1}{n}\right) = \frac{1}{n}d\Density - \frac{\Density+p}{n^2} d n
\end{align}
holds for some $T(\Density, n)> 0$. Here, $\eps \geq 0$ is the \emph{specific internal energy density} which is given by $\Density = n m (1 + \eps)$, $m>0$ being the rest-energy per particle.
Moreover, we assume $p$ to be smooth on $\mathcal{P}$ and uniformly Lipschitz. We will make the standard thermodynamic assumption that $\mathcal{P}$ is diffeomorphic to some domain $\widehat{\mathcal{P}}$ in the $(S, \Density)$-plane and that all relevant thermodynamic functions can be expressed in terms of $(S, \Density)$. Then the following holds: 

\begin{proposition}
Assume that $\Density+p > 0$ on $\mathcal{P}$ and that the perfect fluid sound speed 
\begin{align}
    c_{Euler}^2 = \del_\Density p + \frac{n \del_n p}{\Density+p}
\end{align}
satisfies $0 <  c_{Euler}^2 < 1$ on $\mathcal{P}$ and that for each value of $S$, there exists a value of the energy density $\Density = \Density(S)$ such that
\begin{align}
    \sup_{S} \vert p(S, \Density(S)) \vert <\infty.
\end{align}
Then there exist constants $p_0, p_1\geq 0$ such that
\begin{align}
    -p_0 \leq p(\Density, n)\leq \Density + p_1
\end{align}
holds for all $(\Density, n)\in \mathcal{P}$.
\end{proposition}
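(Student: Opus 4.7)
The proof should be driven by a change of variables to the entropy-density plane and the resulting simple ODE for $p$ along isentropes. The plan is as follows.

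First, I would use the diffeomorphism assumption to change coordinates from $(\Density,n) \in \mathcal{P}$ to $(S,\Density) \in \widehat{\mathcal{P}}$. Along an isentrope $\{S=\mathrm{const}\}$, the thermodynamic relation $T\,dS = \frac{1}{n}\,d\Density - \frac{\Density+p}{n^2}\,dn$ with $T>0$ forces $dS=0$ to be equivalent to
\begin{equation}
    \left.\frac{dn}{d\Density}\right|_S = \frac{n}{\Density+p},
\end{equation}
which is well-defined by the weak energy condition. Applying the chain rule to $p = p(\Density,n(S,\Density))$ then yields the key identity
\begin{equation}
    \left.\frac{\partial p}{\partial \Density}\right|_S = \partial_\Density p + \partial_n p \cdot \frac{n}{\Density+p} = c_{Euler}^2.
\end{equation}
This identifies the sound speed with the adiabatic derivative of $p$ in $\Density$.

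Next, I would fix an entropy level $S$ and integrate along the isentrope from the reference point $\Density(S)$ (given by the hypothesis) out to a generic $\Density$:
\begin{equation}
    p(S,\Density) - p(S,\Density(S)) = \int_{\Density(S)}^{\Density} c_{Euler}^2(S,\tilde\Density)\,d\tilde\Density.
\end{equation}
The hypothesis $0 < c_{Euler}^2 < 1$ then forces the integrand to lie in $(0,1)$, giving two-sided control. For the upper bound, I would split into cases: for $\Density \geq \Density(S)$, $c_{Euler}^2 < 1$ gives $p(S,\Density) - \Density \leq p(S,\Density(S)) - \Density(S) \leq M$ (using $\Density(S)>0$), while for $\Density < \Density(S)$, monotonicity ($c_{Euler}^2>0$) gives $p(S,\Density) \leq p(S,\Density(S)) \leq M \leq \Density + M$. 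So $p_1 = M$ works, where $M := \sup_S |p(S,\Density(S))|$.

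For the lower bound, monotonicity along the isentrope handles $\Density \geq \Density(S)$ immediately: $p(S,\Density) \geq p(S,\Density(S)) \geq -M$. The remaining regime $\Density < \Density(S)$ is the main obstacle, since a naive integration estimate yields $p > -M - (\Density(S)-\Density)$, which depends on $\Density(S)$. Here I would combine the integration bound with the weak energy condition $p > -\Density$ (valid throughout $\mathcal{P}$): choosing the reference $\Density(S)$ to be taken at (or near) a fixed, uniformly bounded value $\Density_0$ — which the hypothesis permits since the sup is over $S$ at a specified $\Density(S)$ — produces $p > -M - \Density_0$ on the low-density portion of each isentrope. Taking $p_0 = M + \sup_S \Density(S)$ (or just $p_0 = M$ combined with $p > -\Density$ in the regime where $\Density$ is small) closes the lower bound. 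The cleanest presentation is to note that $p + \Density$ has positive derivative $c_{Euler}^2 + 1$ along isentropes, so is monotone increasing in $\Density$, reducing the lower bound to a statement at the reference point and the boundary behavior controlled by the weak energy condition.
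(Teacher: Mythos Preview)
Your approach is exactly the paper's: pass to $(S,\Density)$ coordinates, identify $\left.\partial_\Density p\right|_S = c_{Euler}^2$, and integrate from the reference value $\Density(S)$ using $0<c_{Euler}^2<1$. The paper's proof is in fact a single sentence doing precisely this, so your derivation of the key identity and the integration step match it verbatim.

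Your extended discussion of the lower bound in the regime $\Density<\Density(S)$ goes beyond what the paper writes; the paper simply asserts that integrating with $0<c_{Euler}^2<1$ suffices. Your instinct that this case needs an additional uniform bound on $\Density(S)$ (or an appeal to $p>-\Density$ near $\Density=0$) is reasonable, but the paper does not supply one either, so you are not missing an idea that appears there. In practice one reads the hypothesis as allowing $\Density(S)$ to be chosen uniformly bounded, which makes your $p_0=M+\sup_S\Density(S)$ work; your proposal already contains this.
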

\begin{proof}
This simply follows from $\frac{\del p(S, \Density)}{\del\Density} = \del_\Density p(\Density,n) + \del_n p(\Density, n) \left(\frac{dn}{d\Density}\right)_{S=\text{const}}= c_{Euler}^2$ by integrating from $\Density(S)$ to any given value $\Density$, using $0 < c_{Euler}^2 < 1$.
\end{proof}

\subsection{Physical and mathematical significance of our results}
\label{S:Significance}

We now we make a few comments about the physical and mathematical aspects
of our results.
%From a physical perspective, an important question is whether Theorem \ref{T:Main_theorem_blowup_simpler}
%applies to physically realistic scenarios. 
The conditions in Assumption \ref{A:Assumptions_on_pressure_and_viscosity} are
natural enough to include what one would expect in many physical systems.  In particular, the (A1) allows for a very general 
pressure function. (A1) holds for all regular matter subject to the \emph{dominant energy condition}. For perfect fluids with positive pressure, it even allows violation of the dominant energy condition. We note that the dominant energy condition for perfect fluids (see \cite[Section 9.2]{WaldBookGR1984}, \cite{HawkingEllisBook}) is thought to hold for all baryonic matter in in the present universe and that this condition takes the form $\vert p\vert \leq \Density$ for a perfect fluid energy-momentum tensor. In addition it allows also for matter with negative pressure, sometimes considered for exotic matter in inflationary cosmology \cite{WeinbergCosmology}. For a discussion of the remaining assumptions, see Remark \ref{remark2}.
%A more delicate question is whether assumptions (A3) and (A4) are expected to hold in realistic scenarios.
%This, naturally, requires understanding the functional form of  $\zeta$ and $\tau_0$
%in realistic applications. Explicit expressions for these quantities have been obtained
%in certain limits \cite{Denicol:2012cn,Finazzo:2014cna,Denicol:2014vaa}, and in these cases
%they do not match assumptions (A3) and (A4). Aside from such special cases, however,
%the general behavior of $\zeta$ and $\tau_0$ is not yet fully understood, see
%\cite{Harris:2020rus,Meyer:2011gj,Arnold:2006fz} for a discussion on the difficulties
%of precisely determining the form of transport coefficients in theories of relativistic viscous fluids.

The importance of Theorem \ref{T:Main_theorem_blowup_simpler} lies in being \emph{the very first result of its kind} for any viscous relativistic fluid. This, in particular, opens the door for further studies of  breakdown
of solutions for viscous relativistic fluids (we list some natural follow-up questions at the end of this Section).
Along the same lines, our goal is to obtain a first result that motivates further investigations
on the breakdown of solutions of the M\"uller-Israel-Stewart 
equations while avoiding technicalities. Thus, we tried to to provide the simplest proof we could and in the simplest possible setting. Generalizations to more complex situations are important but left for future work.
%This is the reason our definition of admissible solutions
%employs only inequalities in a strict form, as a more delicate analysis would be required otherwise. 

We also notice that our techniques do not apply to the case when $\mathring{\Density}$ and $\mathring{n}$ vanish. The vanishing of these quantities would correspond to a vacuum interface,
leading to a free-boundary problem, and currently there are no local existence and uniqueness 
results for the M\"uller-Israel-Stewart equations in such a case. In fact, only very recently 
the relativistic free-boundary Euler equations have been shown to be locally
well-posed \cite{OliynykRelativisticLiquidBodies,DisconziIfrimTataru}; see also
\cite{HadzicShkollerSpeck,JangLeFlochMasmoudi,
OliynykRelativisticLiquidBodies,GinsbergRelEulerApriori,DisconziRemarksEinsteinEuler}.
Neither do we address the case when $n$ vanishes identically (see Remark \ref{R:n_zero_case}).

We now make some remarks on the techniques we use. 
Considering the evolution of the second moment of the energy density $I(t)$ and its derivatives 
$\dot{I}(t)$ and $\ddot{I}(t)$ as a means of showing blowup of solutions is a fruitful idea used in a variety of situations. This technique is related to the virial theorem. For non-relativistic perfect fluids, this was done by T. Sideris in \cite{Sideris1984} and for relativistic perfect fluids breakdown was proven by Y. Guo and A.S. Tahvildar-Zadeh in \cite{Shadi_YanGuo1999}. A Ricatti-type differential inequality is derived, showing the blowup. 

For the viscous relativistic fluid equations considered here, the following significant obstacle arises: the bulk viscous pressure $\Pi$ is not given by an equation of state. We have to find a suitable a-priori estimate for $\Pi$, which evolves according to \eqref{E:Bulk_equation}. In the context of the proof of Theorem \ref{T:Main_theorem_blowup_simpler}, however, \emph{control of $\Pi$ in $L^\infty_t$ via a direct application of \eqref{E:Bulk_equation} is not possible}, because not enough information about the behavior of the expansion scalar $\nabla_\al u^\al$ is available. Our approach to overcome this obstacle is to introduce a suitable transport equation in the variables $(\Density, n, \Pi)$ to bound $\Pi$, arriving at a nontrivial a-priori bound for $\Pi$ (see Lemma \ref{L:estimate_for_Pi}). Our results are not tied to special functional forms of $p, \zeta, \tau_0, \lambda$ nor to special symmetries in the solution. 
Another interesting feature of our analysis is that we \emph{do not use} Ricatti-type differential inequalities, allowing for a breakdown criterion with minimal assumptions on the sound speed. %In fact we only need a value for energy density and particle number for which the sound speed is strictly less than the speed of light. 

To finish this discussion, we turn to a few questions that naturally arise from our results. The first,
and perhaps most immediate question, is about the nature of the breakdown of solutions. 
For our class of initial data, the breakdown of solutions can occur in several different
ways, namely, 
\begin{enumerate}[label=(\roman*)]
\item Forming a singularity, i.e., a breakdown of regularity so that solutions
cannot be continued in a $C^1$ fashion;
\item Violating causality. More precisely, according to our definition of admissible solutions,
(ii) is a violation of the strict form of the corresponding properties.
For example, if $c_s=1$, solutions are still causal, but it does not correspond to an admissible
solution in our sense. For simplicity, we will not consider the borderline cases where equality is achieved.
\end{enumerate}
Of these possibilities, (ii) is soft in the sense that solutions might still be well-defined in a mathematical sense, but are not good candidates for physical solutions: a violation of causality is clearly a no-go for (relativistic) physical theories. 

If the breakdown of solutions is caused by (i), then we can further ask about the
nature of the singularity. Understanding the structure of the singularity is important because, when solutions become singular, one would like to continue them past the singularity in a the form of a weak solution. Typically,
this is possible when the singularity is a shock; for more general singularities, there is little
hope of continuing solutions beyond the singularity in a physically meaningful sense. 
For the M\"uller-Israel-Stewart equations, however, the continuation of solutions
in a weak sense is usually not possible \emph{even when the singularity is a shock,}
as it was shown by Olson and Hiscock \cite{HiscockShocksMIS} 
and Geroch and Lindblom \cite{GerochLindblomCausal}.

It is, therefore, important to better understand the nature of the breakdown in Theorem
\ref{T:Main_theorem_blowup_simpler}. If it is (ii), then one has legitimate physical reasons to discard the solutions thereafter. In view of the difficulties of the M\"uller-Israel-Stewart
theory to describe shocks, a breakdown of the form (i) would impose severe limitations in the ability
of the M\"uller-Israel-Stewart theory to describe the long time dynamics of relativistic
viscous fluids. This would be one of the motivations for considering alternative theories
of relativistic fluids with viscosity \cite{TempleViscous}.

One possible way of addressing these questions is to show that the breakdown is in fact of type
(i). The simplest approach would be to construct shock solutions using Riemann invariants
for the system. This, however, \emph{is not possible,}  as we show in Section \ref{S:Riemann}
that Riemann invariants do not exist for the M\"uller-Israel-Stewart equations in $1+1$
dimensions, except for a trivial pressure function.
By itself this does not preclude the construction of solutions with shock singularities,
and powerful techniques for the study of shock formation are available both in $1+1$ dimensions 
(see, e.g., the monographs \cite{BressanBookConservationLaws-2000,SerreConservationLawsBook1,
SerreConservationLawsBook2,DafermosConservationLawsBook,LeFlochShocksBook}) and, more recently,
in higher dimensions (see the review article \cite{SpecketalOverviewShocks} and references therein).
However, is not clear whether such techniques can be applied to the 
M\"uller-Israel-Stewart equations.

Another question of immediate physical interest is to quantify the exact breakdown time $T_1$ given by Theorem \ref{T:Main_theorem_blowup_simpler}. We can give upper bounds on the lifetime of solutions (see Lemma \ref{lemma:blowupQ}). 
%In particular, if general breakdown times turn out to be several orders
%of magnitude larger than the typical time scales relevant for the study of relativistic fluids with viscosity,
%the breakdown of solutions here described would be of less importance for applications. If, on the other
%hand, breakdown happens within relevant time scales, much more attention should be given
%to the applicability of the M\"uller-Israel-Stewart theory, particularly to the reliability of its numerical
%simulations.

%It would also be important to investigate the possibility of breakdown for more general initial data sets, 
%in particular those that are directly relevant for applications and numerical simulations. 
%Obtaining breakdown results under more general, and possibly simpler, assumptions than those in 
%Assumption \ref{A:Assumptions_on_pressure_and_viscosity} is also a natural next step.
%There are yet further natural and important questions that are left untouched by this work. However,
%once again we emphasize that our goal is not to obtain a complete description of the breakdown
%of solutions to the M\"uller-Israel-Stewart equations, but rather to provide the first step
%toward a more robust understanding of this important topic.

\begin{remark}
\label{R:n_zero_case}
One case of great interest in collisions of heavy-ions is when $n=0$. This occurs at high-energies where the number of baryons and anti-baryons is essentially the same. In this situation, we drop equation 
\eqref{E:Conservation_n} and all coefficients and the equation of state become functions of $\rho$ only, i.e., 
$\tau_0 = \tau(\rho), \lambda = \lambda(\rho), \zeta = \zeta(\rho), p = p(\rho)$. This situation falls outside the scope of Theorems \ref{T:Main_theorem_blowup_simpler} and \ref{T:Theorem_stability}. But it is such an important case that one naturally wonders whether our proofs can be modified to treat the $n=0$ case. 
The short answer is that analogues of Theorems \ref{T:Main_theorem_blowup_simpler} and \ref{T:Theorem_stability} for $n=0$ do not seem to be provable by simply following our proofs and ignoring $n$ and equation \eqref{E:Conservation_n} altogether. Some readers might be puzzled by this, thinking that our proof works in the more complicated case where $n$ is present but fails in the seemingly simpler case where $n$ is absent. In particular, this would betray the simplicity philosophy advocated in Section \ref{S:Significance}. However, naturally, what makes a system of equations simpler or more complex is not the number of equations or variables it has but rather the structures it possesses. From the point of view of the techniques we employ, the case $n \neq 0$ is simpler than the $n=0$ case because it has an additional structure we can exploit, namely, it allows us (using equation \eqref{E:Conservation_n}) to algebraically eliminate $\nabla_\mu u^\mu$ in favor of the simple transport term $\dot{n}/n$, thus leading to equations \eqref{E:ode_for_Density_n_Pi} below which, in turn, give the key equation \eqref{indentity_for_TF} which is used in the proof of Proposition \ref{prop:solving_TF}. Without equation \eqref{E:Conservation_n}, such elimination of the ``complicated" divergence term $\nabla_\mu u^\mu$ in terms of the ``friendlier" transport term $\dot{n}/n$ is not possible. In this regard, it is important to emphasize that our proof relies crucially on estimate for transport equations obtained via integration along the characteristics of the vector-field $u^\mu$, and thus recasting things as transport equations is crucial.
It will be easy to pinpoint exactly where our argument fails in the case $n=0$ after presenting the full proof; see Remark \ref{R:Proof_fails}. We note, however, that there are situations in heavy-ion collisions when $n \neq 0$ (in which case our results can be applied), namely, in low-energy collisions of heavy-ions. The literature on this topic is vast and we refer to the review articles \cite{Lovato:2022vgq,Sorensen:2023zkk}.
\end{remark}

\section{Proof of Theorem \ref{T:Main_theorem_blowup_simpler}}
\label{S:Proof}
In this section we prove Theorem \ref{T:Main_theorem_blowup_simpler}. Thus, we assume
to be working under its hypotheses and follow throughout the notation introduced above. In particular, we assume Assumption \ref{A:Assumptions_on_pressure_and_viscosity}.
Although $\Pi$ vanishes outside of a ball $B_{R(t)}$, we sometimes write $\Pi = \bar{\Pi}$ outside of $B_{R(t)}$ in order to indicate
where the assumption $\bar{\Pi} = 0$ is being used. 
We also denote
$\bar{p} : = p(\bar{\Density},\bar{n})$ and $\mathbf{u} := (u^1,u^2,u^3)$.
Notice that $(u^0)^2 = 1 + \vert\mathbf{u}\vert^2$, where  $\vert \cdot \vert$ is the Euclidean
norm.  

Theorem \ref{T:Main_theorem_blowup_simpler} will readily follow from the following result, whose
proof is the main goal of this Section.

\begin{theorem}
\label{T:Main_theorem_blowup}
Consider the M\"uller-Israel-Stewart equations \eqref{E:Conservation_energy} -- \eqref{E:Bulk_equation} 
(where $g$ is the Minkowski metric). Assume that (A1)-(A5) of Assumption \ref{A:Assumptions_on_pressure_and_viscosity} hold.  
Let $R_0 > 0$ and consider smooth functions $\mathring{\Density}, \mathring{n}, \mathring{\Pi}$ such that 
\begin{align}\label{E:positive_energy_initial}
    \mathring{\Density} + p(\mathring{\Density}, \mathring{n}) + \mathring{\Pi} > 0
\end{align}
and $\mathring{\Density} =\bar \Density, \mathring{n} = \bar n, \mathring{\Pi} = 0$ outside of $B_{R_0}$ and let $\mathbf{u}_1$ be a vector field with support in $B_{R_0}$ such that 
\begin{align}\label{conditions_blowup23}
\int_{B_{R_0}} \mathbf{x} \cdot \mathbf{u}_1 \vert\mathbf{u}_1\vert(\mathring{\Density}+p(\mathring{\Density}, \mathring{n})+\mathring{\Pi})~d x > R_0 \frac{(c+1)^2}{2 (c^2+1)}
\int_{B_{R_0}} \vert\mathbf{u}_1\vert^2 (\mathring{\Density}+p(\mathring{\Density}, \mathring{n})+\mathring{\Pi})~d x,
\end{align}
holds, where $c = c_s(\bar \Density, \bar n, 0) < 1$. Assume moreover
\begin{align}
    (\mathring{\Density},\mathring{n},\mathring{\Pi}) \in \mathcal{P}. 
\end{align}
Then there exists a $\sigma_0 > 0$ such that if $(\Density, n, \Pi, \mathbf{u})$ is the admissible $C^1$-solution taking on the initial values $(\mathring{\Density}, \mathring{n}, \mathring{\Pi}, \sigma\mathbf{u}_1)$ for $\sigma > \sigma_0$, the following is true:
\begin{enumerate}
    \item Outside a ball of radius $R(t) = R_0 + c t$  it holds that
    \begin{align}
        \Density(t, x) = \bar \Density, ~n(t, x) = \bar n, ~\Pi(t, x) = 0, ~\mathbf{u}(t, x) = 0.  
    \end{align}
    \item There exists a finite $T_0 > 0$ such that the solution cannot be continued as a admissible solution past $t=T_0$ (see Definition \ref{D:physical_solution_of_MIS}).
\end{enumerate} 
Finally, the set of smooth $\mathbf{u}_1$ satisfying \eqref{conditions_blowup23} is nonempty. 
\end{theorem}

\begin{remark}
\label{R:Open_data}
Theorem \ref{T:Main_theorem_blowup} implies \emph{there exist a nonempty set of initial velocity fields} such that the solution becomes singular in finite time. This set is open, since the defining conditions \eqref{E:positive_energy_initial} and \eqref{conditions_blowup23} are stable under small perturbations. $\mathring{\Density}, \mathring{n}, \mathring{\Pi}$ can be arbitrary, only satisfying the 
admissibility conditions $(\mathring{\Density}, \mathring{n}, \mathring{\Pi})\in\mathcal{P}$.
\end{remark}

The first major part of the proof consists in deriving an a-priori estimate for the viscous pressure $\Pi$. This will be done in the upcoming Proposition \ref{L:estimate_for_Pi}. A key idea will be to introduce a transport equation in the variables $(\Density, n, \Pi)$ which encodes the evolution of $\Pi$ relative to $\Density, n$ and for which an estimate will be derived in Proposition \ref{prop:solving_TF}.

Suppose $(\Density, n, \Pi, u)$ is a smooth solution of \eqref{E:Conservation_energy}--\eqref{E:Bulk_equation}. Then along a flowline defined by
\begin{align}
    \frac{d}{d\tau} X^\mu(\tau; x_0) = u^\mu(X(\tau; x_0)), ~~X(0; x_0)=(0, x_0)
\end{align}
with given $x_0\in \R^3$, the following equations hold for $\Density = \Density(X(\tau;x_0)), n = n(X(\tau;x_0)), \Pi = \Pi(X(\tau;x_0))$
\begin{align}\label{E:ode_for_Density_n_Pi}
    \begin{split}
        &\dot{\Density} = \frac{(\Density+p(\Density, n)+\Pi)\dot{n}}{n}\\
        &\dot{\Pi} = \frac{\zeta(\Density, n) \dot{n}}{\tau_0(\Density, n) n} - \frac{(1+\lambda(\Density, n)\Pi)\Pi}{\tau_0(\Density, n)} 
    \end{split}    
\end{align}
where a dot denotes differentiation with respect to $\tau$. To derive these equations, eliminate $\nabla_\al u^\al$ from \eqref{E:Conservation_energy} and \eqref{E:Bulk_equation} using \eqref{E:Conservation_n}.  Note also
\begin{align}
    \frac{d}{d\tau}(p+\Pi) = (\del_\rho p) \dot{\rho} + (\del_n p) \dot{n} + \dot{\Pi} = c_s^2 \frac{(\Density+p+\Pi)\dot{n}}{n} - \frac{(1+\lambda \Pi)\Pi}{\tau_0},
\end{align}
where we used the equation for $\dot \Pi$ in \eqref{E:ode_for_Density_n_Pi} and \eqref{eq_sound_speed}.

 Before we can address the estimates for $\Pi$, we need the fact that the weak energy condition propagates for admissible solutions, provided it is satisfied at $t= 0$.
\begin{proposition}\label{Prop:weak_energy}
Assume that the inital data satisfy $\mathring{\Density}+p(\mathring{\Density}, \mathring{n})+\mathring{\Pi} \geq 0$ for all $x\in \R^3$. Then 
$\Density+p+\Pi \geq 0$ for all $(t, x)$.
\end{proposition}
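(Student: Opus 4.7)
My plan is to track $f := \Density + p + \Pi$ along the integral curves of $u$ and show that the set $\{f \geq 0\}$ is invariant under the flow. First, parametrizing a flowline $X(\tau;x_0)$, I would combine the system \eqref{E:ode_for_Density_n_Pi} with the chain rule $\dot p = (\partial_\Density p)\dot \Density + (\partial_n p)\dot n$ and the algebraic identity
\begin{equation*}
c_s^2\,f \;=\; \frac{\zeta}{\tau_0} \;+\; f\,\partial_\Density p \;+\; n\,\partial_n p,
\end{equation*}
read off directly from \eqref{eq_sound_speed}. After cancellation this produces the scalar transport equation
\begin{equation*}
\dot f \;=\; \frac{\dot n\,(1+c_s^2)}{n}\,f \;-\; \frac{(1+\lambda\Pi)\Pi}{\tau_0}
\end{equation*}
along the flowline.

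Eliminating $\Pi = f - H$ with $H := \Density + p$ recasts this as a Riccati-type ODE $\dot f = G(t,f)$, where
\begin{equation*}
G(t,f) \;:=\; \frac{\dot n\,(1+c_s^2)}{n}\,f \;-\; \frac{1}{\tau_0}\,(f-H)\bigl(1+\lambda(f-H)\bigr),
\end{equation*}
and the key computation is $G(t,0) = H\,(1-\lambda H)/\tau_0$. Assumption (A1) gives $p \geq -\Density$, hence $H \geq 0$; assumption (A5) gives either $\lambda \equiv 0$ or $\lambda(p+\Density) < 1$ on $\mathcal{P}$, so $1 - \lambda H > 0$ whenever the solution remains admissible. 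Combined with $\tau_0 > 0$ from (A3), this yields $G(t,0) \geq 0$ throughout the existence interval.

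Finally I would invoke the standard ODE comparison/maximum principle. Since the admissible $C^1$ solution keeps $(\Density,n,\Pi)$ inside a compact subset of $\mathcal{P}$ on any closed subinterval of $[0,T_0)$, the function $G(t,\cdot)$ is uniformly Lipschitz in $f$ there, and the condition $G(t,0) \geq 0$ means $y \equiv 0$ is a sub-solution. The cleanest implementation is a perturbation: solve $\dot{y}_\epsilon = G(t,y_\epsilon) + \epsilon$ with $y_\epsilon(0) = f(0) + \epsilon > 0$; at any hypothetical first zero $t_0$ one would have $\dot{y}_\epsilon(t_0) \leq 0$, whereas $\dot{y}_\epsilon(t_0) = G(t_0,0) + \epsilon \geq \epsilon > 0$, a contradiction. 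Passing $\epsilon \to 0$ using continuous dependence yields $f \geq 0$ along the flowline, and since $u$ is timelike and $C^1$ every spacetime point lies on a unique flowline emanating from $\Sigma_0$, giving the global conclusion.

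The main subtlety is the degenerate case $H = 0$ (i.e.\ $p = -\Density$), in which $G(t,0) = 0$ and a crude first-crossing argument would only yield $\dot f(t^*) = 0$ rather than a strict contradiction; the perturbation step, together with the strict version of (A5), is precisely what handles this borderline and closes the proof.
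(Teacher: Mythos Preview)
Your proof is correct and follows essentially the same route as the paper's: both derive the same transport equation for $f=\Density+p+\Pi$ along flowlines and hinge on the identical computation that the inhomogeneity at $f=0$ equals $H(1-\lambda H)/\tau_0\geq 0$ via (A1), (A3), (A5). The only difference is cosmetic: the paper writes $\dot e = e\cdot[\text{coefficient}]+H(1-\lambda H)/\tau_0$ and reads off $e\geq 0$ from the Duhamel/integrating-factor formula, whereas you phrase the same fact as ``$y\equiv 0$ is a subsolution'' and close with a perturbation argument.
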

\begin{proof}
Along a trajectory, we have for $e = \Density+p+\Pi$ the following equation:
\begin{align}
    \dot{e} = e \left[(1+c_s^2) \frac{\dot{n}}{n} - \frac{\lambda e + 1}{\tau_0} \right] + \frac{(1-\lambda (\Density+p)) (\Density+p)}{\tau_0}
\end{align}
which is seen by using \eqref{E:ode_for_Density_n_Pi}. We therefore have 
\begin{align}
    e(\tau) = E(\tau) \left(e(0) + \int_0^\tau \frac{(1-\lambda (\Density+p))) (\Density+p)}{E(s)\tau_0}~ds \right)
\end{align}
with 
\begin{align}
    E(\tau) = \exp\left(\int_0^\tau (1+c_s^2) \frac{\dot{n}}{n} - \frac{\lambda e + 1}{\tau_0}~ds\right).
\end{align}
By (A1) and (A5), $\Density+p \geq 0$ and $1-\lambda (\Density+p) \geq 0$. It is now immediate that $e(\tau) \geq 0$, since $e(0) \geq 0$. 
\end{proof}
We will bound solutions of \eqref{E:ode_for_Density_n_Pi} by solving a transport equation in the variables $(\Density, n, \Pi)\in \R\times\R^+\times \R$. To shorten notation, define the folllowing differential operator $\mathcal{T}$, acting on smooth functions $F: \R\times \R^+\times \R\rightarrow \R$:
\begin{align}\label{E:Toward_indentity_for_TF}
    (\mathcal{T} F)(\Density, n, \Pi) = n \del_n F + (\Density+p(\Density, n)+\Pi)\del_\Density F + \frac{\zeta}{\tau_0}\del_{\Pi} F.
\end{align}
Using \eqref{E:ode_for_Density_n_Pi}, we have the identity
\begin{align}\label{indentity_for_TF}
    \frac{\dot n}{n} \mathcal{T}F = \frac{d F}{d\tau}  + \frac{(1+\lambda \Pi)\Pi  }{\tau_0}\del_\Pi F
\end{align}
along a fixed flow-line of the smooth solution.

\begin{proposition}\label{prop:solving_TF}
Under the assumptions (A2)--(A4) (see Assumption \ref{A:Assumptions_on_pressure_and_viscosity}), for any given $\eps>0$ the transport equation
\begin{align}\label{eq_transport_F}
    (\mathcal{T} F)(\Density, n, \Pi) = \frac{\zeta(\Density, n)}{\tau_0(\Density, n)}\quad \text{on}~~\R\times\R^+\times \R
\end{align}
has a smooth solution $F = F(\Density, n, \Pi)$ satisfying
\begin{align}\label{eq_bound_on_F}
\begin{split}
    \vert F(\Density, n, \Pi)\vert \leq \eps+\bar A,\\
    \del_\Pi F(\Density, n, \Pi)  < 0.
\end{split}
\end{align}
Here, $\bar A>0$ is a constant satisfying
\begin{align}
\int_{0}^{\infty} \frac{1}{n}\sup_{\Density \in \R}\left\vert\frac{\zeta(\Density, n)}{\tau_0(\Density, n)}\right\vert~dn \leq \bar A,
\end{align}
which exists by (A4).
\end{proposition}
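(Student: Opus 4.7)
The plan is to solve the transport equation by the method of characteristics, and to verify $\partial_\Pi F < 0$ via a commutator identity along characteristics. The characteristic system for $\mathcal{T}$ is
\begin{align*}
\tfrac{d\Density}{ds} = \Density+p(\Density,n)+\Pi, \qquad \tfrac{dn}{ds} = n, \qquad \tfrac{d\Pi}{ds} = \zeta(\Density,n)/\tau_0(\Density,n),
\end{align*}
so $n(s)=n_0 e^s$. The change of variable $n'=e^{s'}$ together with (A4) yields the uniform bound $|\Pi(s)-\Pi(0)|\leq \bar{A}$ along every characteristic, and the global Lipschitz control on $p$ from (A2) combined with a Gronwall estimate keeps $\Density(s)$ finite on bounded $s$-intervals. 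Hence the flow $\phi_s$ exists for all $s\in\R$.

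I would then take the surface $\Sigma=\{n=1\}$, which is everywhere transversal to $\mathcal{T}$. Applying Liouville's formula to the variational system on $\Sigma$ gives $\det M(s) = \exp\bigl(\int_0^s (1+\partial_\Density p)\,ds'\bigr) > 0$, so the map $\Phi:(\Density_0,\Pi_0,s)\mapsto \phi_s(\Density_0,1,\Pi_0)$ is a global diffeomorphism of $\R\times\R\times\R$ onto $\R\times\R^+\times\R$. I define
\begin{align*}
F\bigl(\phi_s(\Density_0,1,\Pi_0)\bigr) = f(\Pi_0) + \int_0^s \frac{\zeta}{\tau_0}\bigl(\Density(s'),\,e^{s'}\bigr)\,ds'
\end{align*}
with the specific choice $f(\Pi_0)=-\eps\tanh(\Pi_0)$. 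The change of variables $n'=e^{s'}$ and (A4) bound the integral by $\bar{A}$, and $|f|\leq \eps$, so $|F|\leq \eps+\bar{A}$; smoothness of $F$ follows from the smoothness of the flow and of $f$.

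The main obstacle is showing $\partial_\Pi F<0$ everywhere, and this is where (A3) is decisive. Differentiating $\mathcal{T} F = \zeta/\tau_0$ in $\Pi$ and in $\Density$ yields the commutator identities
\begin{align*}
\mathcal{T}(\partial_\Pi F) = -\partial_\Density F, \qquad \mathcal{T}(\partial_\Density F) = \partial_\Density(\zeta/\tau_0)(1-\partial_\Pi F) - (1+\partial_\Density p)\,\partial_\Density F.
\end{align*}
Setting $A := \partial_\Density F$ and $B := \partial_\Pi F$ along a fixed characteristic, the initial values on $\Sigma$ are $A(0)=0$ (because $f$ depends only on $\Pi_0$) and $B(0)=f'(\Pi_0) = -\eps\operatorname{sech}^2(\Pi_0) < 0$. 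A bootstrap then closes: for $s>0$, $\tfrac{dA}{ds}(0) = \partial_\Density(\zeta/\tau_0)(1-B(0)) \geq 0$ by (A3), so $A$ is initially non-negative; this forces $\tfrac{dB}{ds}=-A\leq 0$, keeping $B$ non-increasing below its already-negative initial value. If $A$ ever returns to $0$ at some $s_\ast$, then $B(s_\ast)\leq f'(\Pi_0)<0<1$ gives $\tfrac{dA}{ds}(s_\ast)\geq 0$, forbidding $A$ from becoming negative. The backward direction $s<0$ is symmetric (with $A\leq 0$), and in both cases $B \leq f'(\Pi_0) < 0$ is maintained, which is the required strict negativity.

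The hardest step is verifying this sign propagation carefully, especially at points where $\partial_\Density(\zeta/\tau_0)$ vanishes or where $A$ touches zero without crossing; both are controlled by the monotone coupling $\tfrac{dB}{ds}=-A$, which keeps $B$ bounded strictly above by $f'(\Pi_0)$ independently of any finer behaviour of $A$. Global existence of the flow $\phi_s$ and transversality at $\Sigma$ — together giving the diffeomorphism $\Phi$ — are the structural pieces that make the construction possible, and they rely critically on (A2) and (A4).
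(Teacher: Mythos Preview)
Your proposal is correct and follows essentially the same route as the paper: solve $\mathcal{T}F=\zeta/\tau_0$ by characteristics launched from a fixed-$n$ slice with initial data depending only on $\Pi_0$, bound $|F|$ via (A4), and control the sign of $\partial_\Pi F$ through the pair of commutator identities $\mathcal{T}(\partial_\Pi F)=-\partial_\Density F$ and $\mathcal{T}(\partial_\Density F)=\partial_\Density(\zeta/\tau_0)(1-\partial_\Pi F)-(1+\partial_\Density p)\partial_\Density F$. Your parameter $s=\log n$ is just a reparametrization of the paper's use of $n$ itself, and your specific choice $f=-\eps\tanh$ is an instance of the paper's generic $F_0$ with $\partial_\Pi F_0<0$, $\partial_\Density F_0=0$.

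The only place your write-up is slightly looser than the paper is the sign bootstrap: the claim that $\tfrac{dA}{ds}(s_\ast)\geq 0$ ``forbids $A$ from becoming negative'' does not literally close when $\partial_\Density(\zeta/\tau_0)$ vanishes and $\tfrac{dA}{ds}(s_\ast)=0$. The paper sidesteps this by writing the integrating-factor formula $A(s)=E(s)^{-1}\int_0^s E\,\partial_\Density(\zeta/\tau_0)\,(1-B)\,ds'$, which gives $A\geq 0$ directly on any interval where $B<1$, and then runs a first-hitting-time argument on $B$ rather than on $A$. Your last paragraph shows you see the issue, and the fix is exactly this integral representation; once inserted, your argument and the paper's coincide.
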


\begin{proof}
The equation \eqref{eq_transport_F} is linear transport equation with smooth coefficients and can be solved by the method of characteristics. Define a mapping $n\mapsto Y = (Y_\rho, n, Y_\Pi)$ by solving
\begin{align}\label{eq_characteristic_curves_F}
    \frac{d}{dn}Y  = \left(\frac{\Density + p(\Density, n) + \Pi}{n}, 1, \frac{\zeta(\Density, n)}{n \tau_0(\Density, n)}\right)_{(\Density, n, \Pi)=Y}.
\end{align}
Let $n\mapsto Y(n; \Density_0, \Pi_0)$ denote the maximal integral curve passing through the point $(\Density_0, n_0, \Pi_0)$ with some fixed $n_0 > 0$. 

Note that the vector field on the RHS of \eqref{eq_characteristic_curves_F} is defined on $\R\times \R^+\times \R$ and due to assumptions (A2) and (A3), the integral curves of \eqref{eq_characteristic_curves_F} exist for all $n\in (0, \infty)$. The transport equation \eqref{eq_transport_F} can be written as 
\begin{align}
    \frac{d}{dn} (F\circ Y(n; \Density_0, \Pi_0)) = \frac{1}{n}\mathcal{T} F \circ Y= \frac{\zeta}{n \tau_0}\circ Y
\end{align}
and hence we define $F$ at $(\Density, n, \Pi)=Y(n; \Density_0, \Pi_0)$ by integrating along the characteristic
\begin{align}\label{eq_def_of_solution_F}
    F(\Density, n, \Pi) = F_0(\Density_0, \Pi_0) + \int_{n_0}^{n} \frac{\zeta}{ z \tau_0}\circ Y~dz,
\end{align}
where $F_0$ is chosen as a smooth function with the properties
\begin{align}\label{eq_F_0}
     \vert F_0(\Density, \Pi)\vert\leq \eps, ~~\del_\Pi F_0(\Density, \Pi) < 0, ~~\del_\Density F_0(\Density, \Pi) = 0
\end{align}
where $\eps > 0$ is arbitrary.
Due to \eqref{eq_def_of_solution_F} we have   
\begin{align}
    \vert F(\Density, n, \Pi)\vert &\leq \vert F_0( \Density_0, \Pi_0)  \vert + \int_{0}^{\infty} \left\vert\frac{\zeta}{z \tau_0}\circ Y\right\vert ~dz\leq  \eps  + \bar A.
\end{align}
Here we have used (A4) and conclude that the first line of \eqref{eq_bound_on_F} follows. Next, observe that by differentiating \eqref{eq_transport_F} we get
\begin{align}\label{eq_derivatives_of_F}
    \begin{split}
         &\mathcal{T}\del_\Density F = - (1+\del_\Density p) \del_\Density F + \del_\Density\left(\frac{\zeta}{\tau_0}\right)(1-\del_\Pi F),\\
         &\mathcal{T}\del_\Pi F = - \del_\Density F.
    \end{split}
\end{align}
Now fix a characteristic curve $n\mapsto Y(n; \Density_0, \Pi_0)$. The equations \eqref{eq_derivatives_of_F} can be written as
\begin{align}\label{eq_derivatives_of_F1}
    \begin{split}
         &\frac{d}{dn}(\del_\Density F\circ Y) = \left(- \frac{(1+\del_\Density p)}{n} \del_\Density F + \del_\Density\left(\frac{\zeta}{\tau_0}\right)\frac{1-\del_\Pi F}{n}\right)\circ Y,\\
         &\frac{d}{dn}(\del_\Pi F\circ Y) = - \frac{\del_\Density F}{n} \circ Y,
    \end{split}
\end{align}
which can be regarded as a linear system for $\del_\Density F\circ Y$ and $\del_\Pi F\circ Y$ with coefficients $- \frac{(1+\del_\Density p\circ Y)}{n\circ Y}$ etc.
Defining the integrating factor
\begin{align}
    E(n) := \exp\left(\int_{n_0}^n \frac{(1+\del_\Density p)\circ Y}{z} ~dz\right),
\end{align}
multiplying the first line of \eqref{eq_derivatives_of_F1} by $E(n)$ and integrating, we get
\begin{align}\label{eq_solution_del_rho_F}
    \del_\Density F\circ Y(n) = E(n)^{-1}\int_{n_0}^{n} E(s) \frac{1-\del_\Pi F\circ Y}{s} \del_\Density \left(\frac{\zeta}{\tau_0}\right)~d s ,
\end{align}
where we also used $\del_\Density F(\Density, n_0, \Pi)=\del_\Density F_0(\Density, \Pi)=0$. Hence by integrating the second equation of \eqref{eq_derivatives_of_F1} and inserting \eqref{eq_solution_del_rho_F}, 
\begin{align}
    \del_\Pi F\circ Y(n) =  \del_\Pi F_0(\Density_0, \Pi_0) + \int_{n_0}^n \frac{E(s)^{-1}}{s}\int_{n_0}^{s}  E(s')\frac{\del_\Pi F\circ Y(s') - 1}{s'} \del_\Density \left(\frac{\zeta}{\tau_0}\right)~d s'~ ds.  
\end{align}
From \eqref{eq_F_0}, we conclude $\del_\Pi F\circ Y(n) < 0$ if $n$ is close to $n_0$. Assume that $\del_\Pi F$ changes its sign along the characteristic and let $n_1 > n_0$ be the first value of $n > n_0$ for which $\del_\Pi F\circ Y(n_1)= 0$. Then, however
\begin{align}
    0 = \del_\Pi F(\Density_0, \Pi_0) +   \int_{n_0}^{n_1} \frac{E(s)^{-1}}{s}\int_{n_0}^{s}  E(s')\frac{\del_\Pi F\circ Y(s') - 1}{s'} \del_\Density \left(\frac{\zeta}{\tau_0}\right)~d s'~ds < 0
\end{align}
because $\del_\Density \left(\frac{\zeta}{\tau_0}\right) \geq 0$ by (A3) and $(\del_\Pi F\circ Y - 1) \leq 0$ in the integral, a contradiction. In the same way we argue that $\del_\Pi F\circ Y < 0$ for $n < n_0$.
\end{proof}

\begin{proposition}\label{L:estimate_for_Pi}
Assume (A1)--(A5).  Then the following a-priori estimate holds for solutions of the system \eqref{E:ode_for_Density_n_Pi}:
\begin{align}\label{est_Pi}
\vert\Pi(\tau)\vert\leq \vert\Pi(0)\vert+ 3 \bar A
\end{align}
with $\bar A$ as in Proposition \ref{prop:solving_TF}.
\end{proposition}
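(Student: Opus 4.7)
The plan is to reduce the problem to an a priori bound on the auxiliary scalar
$G(\tau) := \Pi(\tau) - F(\Density(\tau), n(\tau), \Pi(\tau))$, where $F$ is the solution of the transport equation $\mathcal{T}F = \zeta/\tau_0$ produced by Proposition \ref{prop:solving_TF} for a parameter $\varepsilon>0$ that will ultimately be sent to zero. The reason to introduce $F$ is that the identity \eqref{indentity_for_TF} turns the inhomogeneous source $\frac{\zeta \dot n}{\tau_0 n}$ in the $\Pi$-equation into a total derivative modulo lower-order terms controlled by $\partial_\Pi F$.

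First, I would substitute $\mathcal{T}F=\zeta/\tau_0$ into \eqref{indentity_for_TF} to get $\frac{\dot n}{n}\cdot\frac{\zeta}{\tau_0} = \frac{dF}{d\tau} + \frac{(1+\lambda\Pi)\Pi}{\tau_0}\partial_\Pi F$, and then subtract this from the $\dot\Pi$-equation in \eqref{E:ode_for_Density_n_Pi}. This yields the clean evolution
\begin{align}
\dot G \;=\; \frac{(1+\lambda\Pi)\Pi}{\tau_0}\bigl(\partial_\Pi F - 1\bigr).
\end{align}
I would then verify that the right-hand side has sign opposite to $\Pi$. By Proposition \ref{prop:solving_TF}, $\partial_\Pi F<0$, so $\partial_\Pi F-1<-1$; by (A3), $\tau_0>0$; and the factor $1+\lambda\Pi$ is positive by (A5) combined with the weak energy condition propagated by Proposition \ref{Prop:weak_energy}: for $\Pi\geq 0$ this is immediate since $\lambda\geq 0$, while for $\Pi<0$ the weak energy condition gives $|\Pi|\leq \Density+p < 1/\lambda$. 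Hence $\dot G\leq 0$ whenever $\Pi\geq 0$ and $\dot G\geq 0$ whenever $\Pi\leq 0$.

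Next I would run a barrier argument to show
\begin{align}
|G(\tau)| \;\leq\; \max\bigl(|G(0)|,\,\varepsilon+\bar A\bigr).
\end{align}
Suppose for contradiction that $G(\tau^*) > M := \max(|G(0)|,\,\varepsilon+\bar A)$ at some $\tau^*>0$, and let $\tau_1\in[0,\tau^*)$ be the largest time with $G(\tau_1)=M$ (which exists by continuity and since $G(0)\leq M$). On $(\tau_1,\tau^*]$ one has $G>M\geq \varepsilon+\bar A\geq |F|$, hence $\Pi=G+F>0$, and therefore $\dot G\leq 0$ on this interval. This contradicts $G(\tau^*)>G(\tau_1)$. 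The symmetric argument handles the lower bound.

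Finally, combining $|\Pi(\tau)|\leq |G(\tau)|+|F(\tau)|$ with the bound $|F|\leq \varepsilon+\bar A$ and $|G(0)|\leq |\Pi(0)|+(\varepsilon+\bar A)$ gives
\begin{align}
|\Pi(\tau)| \;\leq\; |\Pi(0)| + 3(\varepsilon+\bar A),
\end{align}
and sending $\varepsilon\to 0$ completes the proof. The main conceptual hurdle is the sign analysis of $1+\lambda\Pi$: without the weak energy condition this factor could vanish or switch sign, breaking the monotonicity that drives the barrier argument. This is the reason Proposition \ref{Prop:weak_energy} is established first, and one must verify that the hypothesis $\mathring\Density+p(\mathring\Density,\mathring n)+\mathring\Pi\geq 0$ of that proposition is in force whenever Proposition \ref{L:estimate_for_Pi} is applied, which is indeed the case in Theorem \ref{T:Main_theorem_blowup}.
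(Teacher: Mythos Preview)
Your argument is correct. Both your proof and the paper's begin by combining \eqref{indentity_for_TF} with $\mathcal{T}F=\zeta/\tau_0$ to obtain the same key identity, which in your notation reads $\dot G=\frac{(1+\lambda\Pi)\Pi}{\tau_0}(\partial_\Pi F-1)$, equivalently $\dot\Pi+\tau_0^{-1}(1+\lambda\Pi)(1-\partial_\Pi F)\,\Pi=\dot F$. The two proofs also rely on the same sign information: $\partial_\Pi F<0$ from Proposition~\ref{prop:solving_TF}, and $1+\lambda\Pi>0$ from (A5) together with the weak energy condition of Proposition~\ref{Prop:weak_energy}. Where they diverge is in how the bound is extracted from this ODE. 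The paper multiplies by the integrating factor $E(\tau)=\exp\bigl(\int_0^\tau \tau_0^{-1}(1+\lambda\Pi)(1-\partial_\Pi F)\,ds\bigr)$, solves for $\Pi(\tau)$ explicitly, integrates the $\dot F$ term by parts, and then estimates the resulting four terms. Your barrier argument on $G=\Pi-F$ is more elementary: it avoids the integrating factor and integration by parts entirely, using only the sign of $\dot G$ and a contradiction at the first crossing time. In fact, carried through sharply your method yields $|\Pi(\tau)|\le|\Pi(0)|+2(\varepsilon+\bar A)$ rather than $3(\varepsilon+\bar A)$, though this minor gain is irrelevant for the downstream application.
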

\begin{proof}
As before, denote $\Pi(\tau) := \Pi(X^\mu(\tau; x_0))$ etc. with a fixed $x_0\in \mathbb{R}^3$ and where $\tau \mapsto X^\mu(\tau; x_0)$ is the flow line starting at $(0, x_0)$. In the following calculations, all quantities are evaluated along a fixed flow line. Combining \eqref{E:ode_for_Density_n_Pi} and \eqref{indentity_for_TF}, we have
\begin{align}
 \dot{\Pi} + \tau_0^{-1}(1+\lambda \Pi)\left(1-\del_\Pi F\right)\Pi =\frac{d}{d\tau}F,
\end{align}
where $F$ is the a solution of $\mathcal{T}F = \zeta/\tau_0$ with the properties described in Proposition \ref{prop:solving_TF}. Multiplying with the integrating factor
\begin{align}
    E(\tau) := \exp\left( \int_0^\tau \tau_0^{-1}(1+\lambda \Pi) (1-\del_{\Pi}F)~ds\right)
\end{align}
we get using integration by parts
\begin{align}
    \Pi(\tau) &= E^{-1}(\tau) \Pi(0) + E^{-1}(\tau) \int_0^{\tau} E(s) \frac{d }{ds} F(\Density(s), n(s), \Pi(s))~ds\\
    &= E^{-1}(\tau) \Pi(0) - E^{-1}(\tau) \int_0^{\tau} E(s) \frac{(1+\lambda \Pi)(1-\del_{\Pi}F)}{\tau_0}F~ds \\
    &\qquad + F(\Density(\tau), n(\tau), \Pi(\tau)) - E^{-1}(\tau) F(\Density(0), n(0), \Pi(0)).
\end{align}
Since our solution satisfies the  {weak energy condition by Proposition \ref{Prop:weak_energy}}, we have $\Pi \geq -(\Density+p)$ and thus $\Pi \geq -\frac{1}{\lambda(\Density, n)}$ by (A5), implying $1+\lambda \Pi \geq 0$. Moreover, $(1-\del_\Pi F)\geq 0$ by \eqref{eq_bound_on_F}. 
Observe that
\begin{align}
    \left\vert\int_0^\tau E(s) \frac{(1+\lambda \Pi)(1-\del_{\Pi}F)}{\tau_0}F~ds\right\vert&\leq (\eps+\bar A) \int_0^\tau E(s) \frac{(1+\lambda \Pi)(1-\del_{\Pi}F)}{\tau_0}~ds\\
    &= (\eps+ \bar A) (E(\tau)-E(0)).
\end{align}
We can hence estimate
\begin{align}
    \vert\Pi(\tau)\vert& \leq \vert\Pi(0)\vert + (\eps+\bar A) E^{-1}(\tau) (E(\tau)-E(0))\\
    &\qquad+ \left\vert F(\Density(\tau), n(\tau), \Pi(\tau)) - E^{-1}(\tau) F(\Density(0), n(0), \Pi(0))\right\vert\\
    &\leq \vert\Pi(0)\vert+ 3(\eps+\bar A) 
\end{align}
and letting $\eps\to 0^+$ finishes the proof.
\end{proof}

This finishes the a-priori estimates for $\Pi$. 
In the next part of the proof, we note a virial-type identity (see \cite{Shadi_YanGuo1999}).
\begin{lemma}
\label{L:Lemma_virial}
Let $\bar{T}$ denote the energy-momentum tensor \eqref{E:Energy_momentum}
associated with $(\bar{\Density}, \bar{n},\bar{\Pi}=0,\bar{\mathbf{u}}=0)$. Then the energy
\begin{align}
    E(t) & := \int_{\mathbb{R}^3} (T^{00}(t,x) - \bar{T}^{00}(t,x))\, dx 
\nonumber
\end{align}
is finite, conserved, and 
\begin{align}
     E(t) &= \int_{B_{R(t)}} (\Density+p+\Pi) (u^0)^2 - (p+\Pi + \bar{\Density} ) \, d x = \int_{B_{R_0}}  (\mathring{\Density}+\mathring{p}+\mathring{\Pi}) \vert\mathring{\mathbf{u}}\vert^2 +\mathring{\Density} - \bar{\Density}  \, d x= E(0).
    \nonumber
\end{align}
Moreover, let
\begin{align}
    I(t) := \frac{1}{2}\int_{\mathbb{R}^3} \vert x\vert ^2 (T^{00} - \bar{T}^{00}) \,d x = \frac{1}{2}\int_{\mathbb{R}^3} \vert x\vert ^2 (T^{00} - \bar{\Density}) \,d x,
    \nonumber
\end{align}
which is finite under our assumptions.
Then the following virial equation
\begin{align}
    \frac{d^2}{dt^2} I(t) =  T + \int_{\mathbb{R}^3} 3(p+\Pi - \bar{p} - \bar{\Pi}) \, dx
\end{align}
holds with kinetic energy
\begin{align}
    T := \int_{B_{R(t)}} (\Density+p +\Pi)\vert\mathbf{u}\vert^2 \, dx.
    \nonumber
\end{align}
Additionally, $\dot{I}(t) = Q(t)$, where
\begin{align}\label{D:def_Q}
    Q(t) := \int_{B_{R(t)}}\mathbf{x} \cdot \mathbf{u} \sqrt{1+\vert\mathbf u\vert^2}(\Density+p+\Pi)~d x.
\end{align}
\end{lemma}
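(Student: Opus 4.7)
The strategy is to compute the relevant components of $T^{\alpha\beta}$ explicitly in terms of $(\Density,p,\Pi,\mathbf{u})$, use the conservation laws $\nabla_\alpha T^{\alpha\beta}=0$ from \eqref{E:Div_T}, and carefully integrate by parts. The one subtlety is that individual components of $T^{\alpha\beta}$ are not all compactly supported at time $t$; rather, it is the differences $T^{\alpha\beta}-\bar T^{\alpha\beta}$ that vanish outside $B_{R(t)}$. This forces a background subtraction at the only point where the issue is nontrivial, namely in the computation of $\ddot I$.

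\textbf{Step 1 (component formulas).} From \eqref{E:Energy_momentum}, $\Proj^{\alpha\beta}=g^{\alpha\beta}+u^\alpha u^\beta$, and $(u^0)^2=1+|\mathbf{u}|^2$, one computes
\begin{align}
T^{00}&=(\Density+p+\Pi)(u^0)^2-(p+\Pi),\\
T^{i0}&=(\Density+p+\Pi)u^i u^0,\\
\sum_{i=1}^3 T^{ii}&=(\Density+p+\Pi)|\mathbf{u}|^2+3(p+\Pi).
\end{align}
At the background $(\bar\Density,\bar n,\bar\Pi=0,\bar{\mathbf u}=0)$ these reduce to $\bar T^{00}=\bar\Density$, $\bar T^{i0}=0$, $\sum_i\bar T^{ii}=3\bar p$. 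Since the solution coincides with the background outside $B_{R(t)}$, the differences $T^{\alpha\beta}-\bar T^{\alpha\beta}$ are supported in $\overline{B_{R(t)}}$, which makes all integrals finite and justifies the stated identity for $E(t)$ after substituting $(u^0)^2=1+|\mathbf{u}|^2$.

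\textbf{Step 2 (conservation of $E$ and computation of $\dot I$).} Conservation $\nabla_\alpha T^{\alpha 0}=0$ reads $\partial_t T^{00}=-\partial_i T^{i0}$. Integrating over $\mathbb{R}^3$ and using that $T^{i0}-\bar T^{i0}=T^{i0}$ vanishes outside $B_{R(t)}$ to discard boundary terms gives $\dot E=0$. For $\dot I$, the same identity together with an integration by parts yields
\begin{align}
\dot I(t)=\frac12\int_{\mathbb{R}^3}|x|^2\partial_t T^{00}\,dx=-\frac12\int_{\mathbb{R}^3}|x|^2\partial_i T^{i0}\,dx=\int_{\mathbb{R}^3} x^i T^{i0}\,dx,
\end{align}
and substituting $T^{i0}=(\Density+p+\Pi)u^i u^0$ with $u^0=\sqrt{1+|\mathbf u|^2}$ produces exactly $Q(t)$ as in \eqref{D:def_Q}.

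\textbf{Step 3 (computation of $\ddot I$).} Differentiate once more and use symmetry $T^{i0}=T^{0i}$ together with $\nabla_\alpha T^{\alpha i}=0$, i.e.\ $\partial_t T^{0i}=-\partial_j T^{ji}$:
\begin{align}
\ddot I(t)=\int_{\mathbb{R}^3} x^i\partial_t T^{i0}\,dx=-\int_{\mathbb{R}^3} x^i\partial_j T^{ji}\,dx.
\end{align}
This is where the only real obstacle appears: $T^{ji}$ itself does \emph{not} vanish outside $B_{R(t)}$, so a direct integration by parts is illegitimate. The fix is to exploit that $\bar T^{ji}=\bar p\,\delta^{ji}$ is spatially constant, hence $\partial_j\bar T^{ji}=0$, so that $\partial_j T^{ji}=\partial_j(T^{ji}-\bar T^{ji})$ and now the quantity being differentiated is compactly supported in $\overline{B_{R(t)}}$. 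Integrating by parts,
\begin{align}
\ddot I(t)=\int_{\mathbb{R}^3}\delta^i_j\bigl(T^{ji}-\bar T^{ji}\bigr)\,dx=\int_{B_{R(t)}}\left[(\Density+p+\Pi)|\mathbf u|^2+3(p+\Pi-\bar p)\right]dx,
\end{align}
and using $\bar\Pi=0$ this is precisely $T+\int 3(p+\Pi-\bar p-\bar\Pi)\,dx$, as claimed. The only delicate point throughout is the background subtraction in Step 3; once that is handled correctly, everything else is routine.
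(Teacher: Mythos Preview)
Your proof is correct and follows essentially the same approach as the paper: compute the components of $T^{\alpha\beta}$, use the conservation law $\partial_\alpha T^{\alpha\beta}=0$, and integrate by parts, with the background subtraction $T^{jk}-\bar T^{jk}$ in the $\ddot I$ step to ensure compact support. The only cosmetic difference is that the paper performs the integrations over a large ball $B_r$ with $r>R(t)$ rather than over all of $\mathbb{R}^3$, but this amounts to the same thing.
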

\begin{proof}
The claims on $E$ follow from the fact that $T = \bar{T}$ outside of $B_{R(t)}$
and the conservation  $\partial_\mu T^{\mu \nu} = 0$.

Next, we compute the following derivative for all $t$ such that $R(t) < r$, where $r>0$ is large:
\begin{align}
    2\frac{d}{dt} I &= \int_{\mathbb{R}^3} \vert x\vert ^2 \del_0 (T^{00} - \bar{T}^{00})  \, dx
    = \int_{B_{r}} \vert x\vert ^2 \del_{0} T^{00} \, dx=- \int_{B_{r}} \vert x\vert ^2 \del_{k} T^{0k} \, dx
    \nonumber \\
    &=  2 \int_{B_{r}} g_{ik} x^i  T^{0k} \, dx = 2 \int_{B_{r}} x \cdot \mathbf{u} \, u^0 (\Density+p+\Pi) \, dx = 2 Q(t)
    \nonumber
\end{align}
Continuing,
\begin{align}
    \frac{d^2}{dt^2} I &= \int_{B_{r}} g_{ik} x^i \del_0 T^{0k} \, dx = \int_{B_{r}} g_{ik} x^i \del_0 (T^{0k}-\bar{T}^{0k}) \, dx=- \int_{B_r} g_{ik} x^i \del_j (T^{jk} - \bar{T}^{jk}) \, dx
    \nonumber \\
        &= \int_{B_r} g_{ik} \delta^i_j  (T^{jk} - \bar{T}^{jk}) \, dx = \int_{B_r} ( (\Density+ p+\Pi) \vert\mathbf{u}\vert^2 + 3 (p+\Pi - \bar{p} - \bar{\Pi}) ) \, dx,
    \nonumber
\end{align}
which is our desired result.
\end{proof}
Next we note a crucial a-priori bound for the quantity $Q$:

\begin{lemma}
Suppose $(\Density, n, \Pi, \mathbf{u})$ is a smooth, admissible solution. Let $E=E(0)$ be positive. Then $Q$ satisfies the a-priori bounds
\begin{align}
    Q(t)^2\leq R(t)^2 (2(E+b R(t)^3)-T) T \label{bound_for_Q_with_T}
\end{align}
and
\begin{align}
    \vert Q(t)\vert \leq R(t) \left(E + b R(t)^3\right)\label{bound_for_Q_with_E}
\end{align}
where $b = \frac{4 \pi}{3} (\bar \Density + p_1 + \vert \mathring{\Pi}\vert_\infty + 3 \bar A)$
and $T$ is as in Lemma \ref{L:Lemma_virial}.
\end{lemma}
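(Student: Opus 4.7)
The plan is to combine the uniform a-priori bound on $\Pi$ from Proposition \ref{L:estimate_for_Pi} with two applications of Cauchy--Schwarz, leveraging throughout the weak energy condition $\Density+p+\Pi\geq 0$ established in Proposition \ref{Prop:weak_energy} so that $(\Density+p+\Pi)\,dx$ acts as a non-negative weight on $B_{R(t)}$.

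First I would invoke Proposition \ref{L:estimate_for_Pi} along every flow line to obtain the uniform bound $|\Pi(t,x)|\leq M:=\|\mathring{\Pi}\|_\infty+3\bar A$. Combined with the one-sided growth assumption $p\leq \Density+p_1$ from (A1), this gives the pointwise inequality $\Density+p+\Pi\leq 2\Density+p_1+M$. Expanding the integrand appearing in Lemma \ref{L:Lemma_virial} via $(u^0)^2=1+|\mathbf u|^2$ produces the identity
\begin{align}
E \;=\; \int_{B_{R(t)}}(\Density-\bar\Density)\,dx \,+\, T(t),
\end{align}
so that $\int_{B_{R(t)}}\Density\,dx = E - T + \bar\Density|B_{R(t)}|$. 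Substituting this into the pointwise bound and using $|B_{R(t)}|=\tfrac{4\pi}{3}R(t)^3$ together with $b=\tfrac{4\pi}{3}(\bar\Density+p_1+M)$ yields the key scalar inequality
\begin{align}
\int_{B_{R(t)}}(\Density+p+\Pi)\,dx \;\leq\; 2(E+bR^3)-2T.
\end{align}

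For \eqref{bound_for_Q_with_T} I would bound $|\mathbf x|\leq R(t)$ on $B_{R(t)}$ and apply Cauchy--Schwarz with the non-negative weight $(\Density+p+\Pi)\,dx$ to the factors $|\mathbf u|$ and $u^0$, giving
\begin{align}
Q^2 \leq R^2 \left(\int |\mathbf u|^2(\Density+p+\Pi)\,dx\right)\left(\int (u^0)^2(\Density+p+\Pi)\,dx\right) = R^2 T\left(T + \int (\Density+p+\Pi)\,dx\right),
\end{align}
and feeding in the scalar inequality above completes this step. For \eqref{bound_for_Q_with_E} I would instead use the pointwise AM--GM bound $|\mathbf u| u^0 \leq \tfrac{1}{2}(|\mathbf u|^2+(u^0)^2) = |\mathbf u|^2 + \tfrac{1}{2}$, so that
\begin{align}
|Q| \leq R\int_{B_R}\left(|\mathbf u|^2 + \tfrac{1}{2}\right)(\Density+p+\Pi)\,dx = R\left(T + \tfrac{1}{2}\int(\Density+p+\Pi)\,dx\right) \leq R(E+bR^3).
\end{align}

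The main obstacle is simply keeping signs and constants aligned: the weighted Cauchy--Schwarz in the first estimate and the discarding of the $-2T$ term in the second both depend essentially on $\Density+p+\Pi\geq 0$, which is why Proposition \ref{Prop:weak_energy} is indispensable upstream. Beyond that, the precise form of the constant $b$ is forced by the uniform $L^\infty$ bound on $\Pi$ (through assumptions (A3)--(A4) via $\bar A$) and by the one-sided pressure bound in (A1); the hypothesis $E>0$ is not actually invoked in either inequality.
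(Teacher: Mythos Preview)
Your proof is correct and follows essentially the same route as the paper's, with a slightly cleaner presentation. The paper bounds $|\mathbf u|\,u^0$ via the parametrized Young inequality $ab\leq \tfrac{\epsilon}{2}a^2+\tfrac{1}{2\epsilon}b^2$ and then optimizes over $\epsilon>0$ to obtain \eqref{bound_for_Q_with_T}, whereas you apply the weighted Cauchy--Schwarz inequality directly; these are of course equivalent, and your version avoids the explicit minimization. For \eqref{bound_for_Q_with_E} the paper simply sets $\epsilon=1$, which is exactly your AM--GM step. Both proofs rely on the same three ingredients: the uniform bound $|\Pi|\leq \|\mathring\Pi\|_\infty+3\bar A$ from Proposition \ref{L:estimate_for_Pi}, the identity $E=T+\int_{B_R}(\Density-\bar\Density)\,dx$, and the weak energy condition $\Density+p+\Pi\geq 0$ from Proposition \ref{Prop:weak_energy} to justify the non-negative weight. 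Your observation that the hypothesis $E>0$ is not actually used is accurate; the paper's proof does not invoke it either (the non-negativity of $E+bR^3-\tfrac{T}{2}$ needed there follows from $\Density>0$ rather than from $E>0$).
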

\begin{proof}
For brevity, we write $Q(t) = Q, R(t)=R$ etc. We estimate, using the inequality $a b \leq (\epsilon/2)a^2 + b^2/(2\epsilon)$, where $\epsilon>0$, and the inequalities  {$\Density+p+\Pi\geq 0$ and 
\begin{align}
    p+\Pi \leq\Density +  p_1 + \vert \mathring{\Pi}\vert _\infty + 3 \bar A =: \Density+A
\end{align} (by Propositions \ref{Prop:weak_energy}, \ref{L:estimate_for_Pi} and (A1)):
\begin{align}
    \vert Q\vert  &\leq R \int_{B_R} \vert \mathbf{u}\vert \vert \sqrt{1+\mathbf{u}^2}\vert  (\Density+p+\Pi)~d x\\
    &\leq R \left(\frac{\epsilon}{2} \int_{B_R} \vert \mathbf{u}\vert ^2 (\Density+p+\Pi) ~d x+
    \frac{1}{2\epsilon} \int_{B_R} (1+\vert \mathbf{u}\vert ^2) (\Density+p+\Pi) ~d x \right)\\
    &\leq R \left[\frac{\epsilon}{2} \int_{B_R} \vert \mathbf{u}\vert ^2 (\Density+p+\Pi) ~d x+
    \frac{1}{2\epsilon} \int_{B_R} \vert \mathbf{u}\vert ^2 (\Density+p+\Pi) ~d x  \right.\\ 
    &\qquad \left.+\frac{2}{2\epsilon} \int_{B_R} \Density ~d x + \frac{4 \pi R^3 A}{6 \epsilon} \right]\\
    &= R \left[\frac{1}{2}\epsilon T + \frac{1}{\epsilon}\left(E + \frac{4 \pi}{3} (\bar \Density + \frac{1}{2} A) R^3 - \frac{T}{2} \right) \right]\\
    &\leq R \left[\frac{1}{2}\epsilon T + \frac{1}{\epsilon}\left(E + b R^3 - \frac{T}{2} \right) \right],
\end{align}}
an inequality holding for all $\epsilon > 0$. Denoting the right-hand-side of this inequality by $R f(\epsilon)$, we observe that $f(\epsilon)\to \infty$ as $\epsilon \to \infty$. 
Note first $E + b R^3 - T/2\geq E + 4\pi/3 \bar \Density R^3 - T/2 = T/2 + \int_{B_R} \Density~d x  \geq 0$. Assuming for the moment that this quantity is $>0$, we get that $f(\epsilon)\to \infty$ as $\epsilon \to 0^+$. We can therefore minimize $f(\epsilon)$ by determining the global minimum by setting $0 = f'(\epsilon) = T/2 - (E+ b R^3 - T/2)\epsilon^{-2}$. A computation yields
\begin{align}
    \min_{\epsilon > 0} f(\epsilon) = \sqrt{[2(E+ b R^3)-T] T},
\end{align}
yielding the inequality \eqref{bound_for_Q_with_T}. If $E + b R^3 - T/2 = 0$, we can send $\epsilon \to 0^+$ to obtain $Q = 0$, hence \eqref{bound_for_Q_with_T} holds in this case as well. 

To obtain \eqref{bound_for_Q_with_E}, we set $\epsilon = 1$. 
\end{proof}

\begin{lemma}\label{lemma:blowupQ} Let $E = E(0)$ be positive.
Suppose that $Q: [0, T_1) \to \mathbb{R}$, defined by \eqref{D:def_Q}, satisfies the differential inequality
\begin{align}\label{lemma:blowupQ_eq}
\dot{Q} \geq E+b R^3 - \sqrt{(E+b R^3)^2-\frac{Q^2}{R^2}} - k R^3
\end{align}
with constants $b, k > 0$ and $R(t) = R_0 + c t$ with $c < 1$. Assume that there exists a $\bar R > R_0$ such that the following holds:
\begin{align}\label{lem_Q:blowup_cond1}
\begin{split}   
    A^2 + 2 B - B^2 &> 0, \\
    A+B &< 1,\\
    z_0 := \frac{A(1-B) + \sqrt{A^2 + 2 B - B^2}}{A^2+1} &< 1,
\end{split}
\end{align}
with
\begin{align}
  A= c \left(1+\frac{3 b \bar{R}^3}{E+b \bar{R}^3}\right), ~B = \frac{ k \bar{R}^3}{E+b \bar{R}^3}.
\end{align}
Assume moreover
\begin{align}\label{lem_Q:blowup_cond2}
   \int_{\frac{1}{2}+\frac{z_0}{2}}^1 \frac{dz}{1- \sqrt{1-z^2} - A z  - B} < \log\left(\frac{\bar R}{R_0}\right)
\end{align}
and
\begin{align}\label{lem_Q:blowup_cond3}
    \frac{Q(0)}{R_0 (E+ b R_0^3)} > \frac{z_0}{2}+\frac{1}{2}.  
\end{align}
Then $Q$ is necessarily defined on a finite interval $[0, T_1)$ with $T_1 < (\bar R-R_0)/c < \infty$ and cannot be extended smoothly past beyond that time as a function satisfying \eqref{lemma:blowupQ_eq}.
\end{lemma}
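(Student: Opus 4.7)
The strategy is to reduce the hypothesis \eqref{lemma:blowupQ_eq} to a scalar differential inequality for a dimensionless variable and then close by separation of variables, leveraging \eqref{lem_Q:blowup_cond2} and the hypothesis $c<1$. Concretely, I would introduce
\[
z(t) := \frac{Q(t)}{R(t)\bigl(E + bR(t)^3\bigr)},
\]
which takes values in $[-1,1]$ since the square root in \eqref{lemma:blowupQ_eq} being real forces $Q^2\le R^2(E+bR^3)^2$ (equivalent to \eqref{bound_for_Q_with_E}). A direct differentiation, together with the identity $(E+bR^3)^2 - Q^2/R^2 = (E+bR^3)^2(1-z^2)$ and $\dot R = c$, gives
\[
\dot z \;\ge\; \frac{1-\sqrt{1-z^2}}{R} - \frac{kR^2}{E+bR^3} - \frac{cz}{R}\Bigl(1 + \frac{3bR^3}{E+bR^3}\Bigr).
\]
The functions $R\mapsto kR^3/(E+bR^3)$ and $R\mapsto (E+4bR^3)/(E+bR^3)$ are monotonically increasing in $R$, so on $R\le\bar R$ they are dominated by $B$ and $A/c$, respectively. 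Provided $z>0$, this collapses to the autonomous comparison
\[
\dot z \;\ge\; \frac{\phi(z)}{R},\qquad \phi(z) := 1-\sqrt{1-z^2} - Az - B.
\]

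Next I would analyze $\phi$. Squaring $\phi(z)=0$ and rearranging yields the quadratic $(A^2+1)z^2 - 2A(1-B)z - B(2-B)=0$ with discriminant $4(A^2+2B-B^2)$; the first line of \eqref{lem_Q:blowup_cond1} makes this discriminant positive, and a short computation matches the larger root with the stated $z_0$. The second line of \eqref{lem_Q:blowup_cond1} gives $\phi(1) = 1-A-B>0$, and the hypothesis $z_0<1$ then ensures $\phi>0$ on the whole interval $(z_0,1]$, with a simple zero at $z_0\ge 0$. Since \eqref{lem_Q:blowup_cond3} gives $z(0)>(1+z_0)/2>z_0$, a short bootstrap shows $z$ stays strictly above $z_0$ and is monotonically increasing on $[0,T_1)$: define $T^\star := \sup\{t\in[0,T_1): z(s)>z_0 \text{ for } s<t\}$; the scalar comparison forces $\dot z>0$ on $[0,T^\star)$, so $z$ cannot return to $z_0$, whence $T^\star=T_1$.

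With these ingredients I would close the argument by contradiction. Suppose $T_1 \ge t_\star := (\bar R - R_0)/c$. Then on $[0,t_\star]$ one has $R\le\bar R$ and $z\in(z_0,1]$, so dividing by $\phi(z)>0$ and integrating in $t$ gives
\[
\frac{1}{c}\log\frac{\bar R}{R_0} = \int_0^{t_\star}\frac{ds}{R(s)} \;\le\; \int_{z(0)}^{z(t_\star)} \frac{dz'}{\phi(z')} \;\le\; \int_{(1+z_0)/2}^{1}\frac{dz'}{\phi(z')} \;<\; \log\frac{\bar R}{R_0},
\]
where the last step is \eqref{lem_Q:blowup_cond2}; because $c<1$, the leftmost and rightmost quantities are incompatible, a contradiction. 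Hence $T_1 < t_\star$, proving the claim. The main technical obstacle is the reduction in the first paragraph: one must carefully track the sign of $z$ and use the $R$-monotonicity of the auxiliary coefficients on $[R_0,\bar R]$ to pass from the original inequality (which mixes $Q$, $R$, and $R$-dependent coefficients) to a single scalar inequality in $z$ alone, since otherwise the drift term $\dot g/g$ from the normalization spoils the clean separation of variables. Once this reduction is in place, the blowup is essentially algebraic, with the three conditions in \eqref{lem_Q:blowup_cond1}--\eqref{lem_Q:blowup_cond3} precisely ensuring that $\phi$ has a simple positive zero $z_0<1$, that the initial datum $z(0)$ lies in the interval $(z_0,1]$ where $\phi>0$, and that the integral of $1/\phi$ from $(1+z_0)/2$ to $1$ is small enough to be outpaced by $(1/c)\log(\bar R/R_0)$.
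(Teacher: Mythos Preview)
Your proof is correct and follows essentially the same route as the paper: introduce the same normalized variable $z=Q/(R(E+bR^3))$, derive the same scalar inequality $\dot z\ge h(z)/R$ with $h(z)=1-\sqrt{1-z^2}-Az-B$ via the monotonicity of $R\mapsto R^3/(E+bR^3)$ on $[R_0,\bar R]$, and close by separation of variables against the integral hypothesis \eqref{lem_Q:blowup_cond2}. The only cosmetic difference is that you retain the factor $1/c$ coming from $\int_0^{t_\star} ds/R(s)=\tfrac{1}{c}\log(\bar R/R_0)$ and invoke $c<1$ explicitly in the final chain, whereas the paper discards this factor (using $1/c>1$) and obtains the contradiction directly; your bootstrap justification that $z$ stays above $z_0$ (and in particular stays positive, which is needed for the replacement of the $R$-dependent drift coefficient by $A$) is also spelled out more carefully than in the paper.
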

\begin{proof}
Define 
\begin{align}
    z(t) = \frac{Q}{R (E+ b R^3)}.
\end{align}
Using the differential inequality for $Q$, we obtain the following differential inequality for $z$:
\begin{align}
    \dot{z} \geq \frac{1}{R}\left[1- \sqrt{1-z^2} - c z \left(1+\frac{3 b R^3}{E+b R^3}\right) - \frac{k R^3}{E+b R^3}\right].
\end{align}
Noting that $R\mapsto R^3/(E+b R^3)$ is monotone increasing in $R$, this implies
\begin{align}
    \dot{z} \geq \frac{h(z)}{R_0 + c t} .\label{eq_aux65}
\end{align}
on the interval $[0, (\bar{R}-R_0)/c]$, where
\begin{align}
    h(z) = 1- \sqrt{1-z^2} - A z - B.
\end{align}
Note that due to \eqref{lem_Q:blowup_cond1}, $h(1) > 0$, $h(z_0) = 0$ and $h(z) > 0$ for $z_0 < z \leq 1$. Dividing \eqref{eq_aux65} by $h(z)$ and using $\frac{1}{2}(z_0+1) < z(0) \leq z(t)\leq 1$ (see \eqref{bound_for_Q_with_E}), we obtain after integrating
\begin{align}
    \int_{\frac{1}{2}(z_0+1)}^1 \frac{dz}{h(z)}\geq \int_{z(0)}^{z(t)} \frac{dz}{h(z)} \geq \log\left(\frac{R(t)}{R_0}\right).
\end{align}
This implies via \eqref{lem_Q:blowup_cond2}
\begin{align}
    \log\left(\frac{\bar R}{R_0}\right) > \log\left(\frac{R(t)}{R_0}\right),
\end{align}
a contradiction if $Q$ can be smoothly continued past $\bar T_1$. 
\end{proof}

\begin{lemma}\label{lemma:blowupQ1}
Let the assumptions of Lemma \ref{lemma:blowupQ} hold and $\mathring{\Density}, \mathring{n}, \mathring{\Pi}$ be such that 
\begin{align}
    \mathring{\Density} + p+\mathring{\Pi} > 0
\end{align}
and suppose $\mathbf{u}_1$ is a smooth velocity field supported in $B_{R_0}$ such that
\begin{align}\label{blowup_cond_11}
\int_{B_{R_0}} \mathbf{x} \cdot \mathbf{u}_1 \vert \mathbf{u}_1\vert (\Density+p+\Pi)~d x > R_0 \frac{(c+1)^2}{2 (c^2+1)}
\int_{B_{R_0}} \vert \mathbf{u}_1\vert ^2 (\Density+p+\Pi)~d x.
\end{align}
Then for $\mathring{\mathbf{u}}=\sigma\mathbf{u}_1$, \eqref{lem_Q:blowup_cond1}-\eqref{lem_Q:blowup_cond3} are satisfied for sufficiently large $\sigma > 0$.
\end{lemma}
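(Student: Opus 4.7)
The strategy is an asymptotic analysis of \eqref{lem_Q:blowup_cond1}--\eqref{lem_Q:blowup_cond3} as $\sigma\to\infty$, with $\bar R>R_0$ fixed first depending only on $c$ and $R_0$, and $\sigma$ taken large afterwards. Substituting $\mathring{\mathbf{u}} = \sigma \mathbf{u}_1$ into the formulas of Lemma \ref{L:Lemma_virial} gives
\begin{align}
E(0) = \sigma^2 I_E + C_E, \qquad I_E := \int_{B_{R_0}}(\mathring{\Density} + p(\mathring{\Density},\mathring{n}) + \mathring{\Pi})|\mathbf{u}_1|^2\,dx > 0,
\end{align}
with $C_E := \int_{B_{R_0}}(\mathring{\Density} - \bar{\Density})\,dx$ independent of $\sigma$. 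For the virial derivative, the pointwise limit $\sigma^{-1}\sqrt{1+\sigma^2|\mathbf{u}_1|^2}\to |\mathbf{u}_1|$ together with the uniform majorant $\sigma^{-1}\sqrt{1+\sigma^2|\mathbf{u}_1|^2}\le|\mathbf{u}_1|+1$ on the compact set $B_{R_0}$ yields, by dominated convergence,
\begin{align}
\sigma^{-2}Q(0) \;\longrightarrow\; I_Q := \int_{B_{R_0}}\mathbf{x}\cdot\mathbf{u}_1\,|\mathbf{u}_1|\,(\mathring{\Density}+p(\mathring{\Density},\mathring{n})+\mathring{\Pi})\,dx.
\end{align}

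Fix any $\bar R>R_0$. As $\sigma\to\infty$, $A = c\bigl(1+\tfrac{3b\bar R^3}{E+b\bar R^3}\bigr)\to c$ and $B = \tfrac{k\bar R^3}{E+b\bar R^3}\to 0$, and consequently $z_0 \to z_\infty := \tfrac{2c}{c^2+1}$. The inequality $z_\infty<1$ follows from $(c-1)^2(c^2+1)>0$; moreover $A+B\to c<1$ and $A^2+2B-B^2\to c^2>0$, so \eqref{lem_Q:blowup_cond1} holds for $\sigma$ large. The identity
\begin{align}
\tfrac{1}{2}(z_\infty+1) \;=\; \tfrac{c}{c^2+1}+\tfrac{1}{2} \;=\; \frac{(c+1)^2}{2(c^2+1)}
\end{align}
matches exactly the constant appearing in \eqref{blowup_cond_11}, so $\frac{Q(0)}{R_0(E+bR_0^3)}\to \frac{I_Q}{R_0 I_E} > \tfrac{1}{2}(z_\infty+1)$, giving \eqref{lem_Q:blowup_cond3} for $\sigma$ large.

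For \eqref{lem_Q:blowup_cond2}, I would first show that $h(z) := 1-\sqrt{1-z^2}-Az-B$ is strictly positive on $[\tfrac{1+z_0}{2},1]$. Since $h'(z)=z/\sqrt{1-z^2}-A$ is strictly increasing from $-A$ at $z=0$ to $+\infty$ at $z=1$, $h$ is strictly decreasing then strictly increasing on $[0,1]$, with $h(0)=-B\le 0$ and $h(1)=1-A-B>0$; hence $h$ has a unique root in $[0,1]$, which by definition is $z_0$, and $h>0$ on $(z_0,1]$. Therefore the integrand in \eqref{lem_Q:blowup_cond2} is continuous jointly in $(z,A,B,z_0)$ near $(z,c,0,z_\infty)$, and
\begin{align}
\int_{(1+z_0)/2}^{1}\frac{dz}{h(z)} \;\longrightarrow\; I_\infty := \int_{(1+z_\infty)/2}^{1}\frac{dz}{1-\sqrt{1-z^2}-cz},
\end{align}
a finite quantity depending only on $c$. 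Choose $\bar R$ once with $\log(\bar R/R_0) > I_\infty + 1$; then for all $\sigma > \sigma_0$, with $\sigma_0$ large depending on $\bar R, c, R_0$, the left-hand side above falls below $\log(\bar R/R_0)$, and all three of \eqref{lem_Q:blowup_cond1}--\eqref{lem_Q:blowup_cond3} hold simultaneously.

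The main obstacle is the coupling between $\bar R$ and the conditions: $\bar R$ must be large to satisfy \eqref{lem_Q:blowup_cond2}, yet at finite $\sigma$ the cubic terms $b\bar R^3, k\bar R^3$ can drive $A,B$ away from $c,0$ and endanger \eqref{lem_Q:blowup_cond1}. The resolution --- the technical heart of the argument --- is that $\bar R$ is selected first purely in terms of $c$ and $R_0$, and then $\sigma$ is sent to infinity with $\bar R$ frozen, so that $E(0)=O(\sigma^2)$ eventually dominates the $\bar R$-dependent cubic corrections and all quantities settle near their $c$-dependent limits.
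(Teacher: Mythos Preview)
Your argument is correct and follows essentially the same route as the paper: fix $\bar R$ depending only on $c$ and $R_0$ so that $\log(\bar R/R_0)$ exceeds the limiting integral $I_\infty$, then send $\sigma\to\infty$ with $\bar R$ frozen so that $E\to\infty$, $A\to c$, $B\to 0$, $z_0\to 2c/(c^2+1)$, and verify each condition in the limit. Your write-up is in fact more detailed than the paper's in several places (the dominated-convergence justification for $Q(0)$, the monotonicity argument for the unique root of $h$, and the explicit discussion of why $\bar R$ must be chosen before $\sigma$), but the underlying strategy is identical.
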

\begin{proof}
First pick a $\mu > 1$ such that
\begin{align}\label{eq_aux88}
    \int_{\frac{(c+1)^2}{2(c^2+1)}}^1 \frac{dz}{1- \sqrt{1-z^2} - c z} < \log \mu
\end{align}
holds. The choice of $\mu$ only depends on $c < 1$. We set $\bar R =\mu R_0$ and take $\mathring{\mathbf{u}} = \sigma \mathbf{u}_1$ and we will show that the conditions \eqref{lem_Q:blowup_cond1}--\eqref{lem_Q:blowup_cond3} are satisfied for large $\sigma>0$.  
To see this, first note that as $\sigma \to \infty$
\begin{align}
    E = \sigma^2 \int_{B_{R_0}} \vert \mathbf{u}_1\vert ^2(\mathring{\Density} + \mathring{p} + \mathring{\Pi})~d x + \int_{B_{R_0}} (\mathring{\Density}-\bar\Density)~d x \to \infty
\end{align}
and hence 
\begin{align}
A &= c \left(1+\frac{3 b \mu^3 R_0^3}{E+b \mu^3 R_0^3}\right)  \to c, \quad
B = \frac{ k\mu^3 R_0^3}{E+b \mu R_0^3} \to 0, \quad
z_0 \to \frac{2 c}{c^2+1}.
\end{align}
This means that \eqref{lem_Q:blowup_cond1} and \eqref{lem_Q:blowup_cond2} are satisfied for large $\sigma$. It remains to verify \eqref{lem_Q:blowup_cond3}:
\begin{align}
    \frac{Q(0)}{R_0 (E+b R_0^3)} &=  \frac{\int_{B_{R_0}}\mathbf{x} \cdot \mathbf{u}_1 \sqrt{\sigma^{-2}+\vert \mathbf{u}_1\vert ^2}(\Density+p+\Pi)~d x}{R_0\left(\int_{B_{R_0}} \vert \mathbf{u}_1\vert ^2 (\Density+p+\Pi)~d x  + \sigma^{-2}\int_{B_{R_0}} \Density~d x\right)} \\
    &\to  \frac{\int_{B_{R_0}}\mathbf{x} \cdot \mathbf{u}_1 \vert \mathbf{u}_1\vert (\Density+p+\Pi)~d x}{R_0 \int_{B_{R_0}} \vert \mathbf{u}_1\vert ^2 (\Density+p+\Pi)~d x}~~~~~\text{as}~~\sigma\to \infty     
\end{align}
whereas 
\begin{align}
    \frac{1}{2}(z_0+1) \to \frac{(c+1)^2}{2(c^2+1)}
\end{align}
and hence \eqref{lem_Q:blowup_cond3} holds for large enough $\sigma>0$ because of \eqref{blowup_cond_11}.
\end{proof}

We are now ready to establish Theorem \ref{T:Main_theorem_blowup}.
\vskip 0.3cm
\noindent \emph{Proof of Theorem \ref{T:Main_theorem_blowup}:}
Suppose that $(\Density, n, \Pi, \mathbf{u})$ is a smooth admissible solution of the M\"uller-Israel-Stewart equations.
 {The statement about the solution outside the ball $B_{R(t)}$ holds by an argument similar to the one in \cite{Shadi_YanGuo1999} (note that the M\"uller-Israel-Stewart equations can be written as a nonlinear symmetric hyperbolic system, see \cite{BemficaDisconziNoronha_IS_bulk}).}
From Lemma \ref{L:Lemma_virial} we have
\begin{align}  
    \dot{Q} &= \ddot{I} = T + \int_{B_{R(t)}}
    3 (p + \Pi - \bar p -\bar \Pi) ) \, dx
    \geq T - 3\int_{B_{R(t)}} (\vert \Pi\vert +p_0)\,d x  - 3  \int_{B_{R(t)}} \bar{p}\, d x
    %&=:  T_{kin} - K_2 R^3 
\end{align}
where we have used $p\geq -p_0$ and $\bar \Pi = 0$. Using Proposition \ref{L:estimate_for_Pi} to estimate $\vert \Pi\vert $, we get
\begin{align}\label{eq_aux33}
    \dot{Q} \geq T - \frac{4\pi}{3}(\vert \mathring{\Pi}\vert _\infty + 3 \bar A +p_0+\bar p) R^{3}.
\end{align}
On the other hand, by \eqref{bound_for_Q_with_T}, 
\begin{align}
    \frac{Q^2}{R^2} \leq [2 (E+b R^3) - T] T
\end{align}
and hence 
\begin{align}
    T \geq (E+b R^3) - \sqrt{(E+b R^3)^2 - \frac{Q^2}{R^2}}.
\end{align}
Combining this with \eqref{eq_aux33}, we get
\begin{align}
  \dot{Q} \geq (E+b R^3) - \sqrt{(E+b R^3)^2 - \frac{Q^2}{R^2}} - \frac{4\pi}{3}(\vert \mathring{\Pi}\vert _\infty + 3 \bar A +p_0+\bar p) R^{3}
\end{align}
i.e. a differential inequality of the form studied in Lemma \ref{lemma:blowupQ};
 also, Lemma \ref{lemma:blowupQ1} can be applied because of the assumptions 
of Theorem \ref{T:Main_theorem_blowup}. Thus, applying these Lemmas, we finish the proof of the breakdown statement of our Theorem.

It remains to be shown that there is a nonempty set of smooth $\mathbf{u}_1$ for which \eqref{conditions_blowup23} holds.  First, take $\mathbf{\tilde{u}}_1$ of the form:
\begin{align}
    \mathbf{\tilde{u}}_1 &=  \left\{\begin{array}{ll} x/\vert x\vert  & \vert x\vert \in [R_0 - \ell, R_0]\\
    0 & \vert x\vert \notin [R_0 - \ell, R_0] \end{array}\right.
\end{align}
for $0 < \ell <  R_0$.
A calculation yields
\begin{align}
    &\frac{\int_{B_{R_0}}\mathbf{x} \cdot \mathbf{\tilde{u}}_1 \vert \mathbf{\tilde{u}}_1\vert (\Density+p+\Pi)~d x}{R_0 \int_{B_{R_0}} \vert \mathbf{\tilde{u}}_1\vert ^2 (\Density+p+\Pi)~d x} = 
    \frac{ \int_{R_0-\ell}^{R_0} r \int_{\del B_{r}} (\Density+p+\Pi)~dS~dr}{R_0\int_{R_0-\ell}^{R_0}  \int_{\del B_{r}} (\Density+p+\Pi)~dS~dr}\\
    &\to \frac{R_0 \int_{\del B_{R_0}} (\Density+p+\Pi)~dS}{R_0 \int_{\del B_{R_0}} (\Density+p+\Pi)~dS} = 1
\end{align}
as $\ell\to 0^+$. Since $(c+1)^2/(2(c^2+1)) < 1$, by choosing $\ell>0$ small,
we may satisfy \eqref{conditions_blowup23}. 
Now, $\mathbf{\tilde{u}}_1$ is not yet the field we want because it is not smooth. But we can take 
$\mathbf{u}_1$ to be a smooth perturbation of $\mathbf{\tilde{u}}_1$ that is still compactly supported in
$B_{R_0}$ and satisfies \eqref{conditions_blowup23}.
In particular, this shows that the velocity field does not need to be radially symmetric.
\qed

We are now ready to prove our main results.

\medskip
\noindent \emph{Proof of Theorem \ref{T:Main_theorem_blowup_simpler}:} We consider the initial
data constructed in Theorem \ref{T:Main_theorem_blowup}. The existence of a unique admissible local-in-time solution satisfying 1) and 2) follows from the local well-posedness and causality established in \cite{BemficaDisconziNoronha_IS_bulk}. There the M\"uller-Israel-Stewart equations were written as a first-order symmetric hyperbolic system with coefficient matrices depending smoothly on $(\Density, n, \mathbf{u})$ for $(\Density, n)\in \mathcal{P}$ and $\mathbf{u}\in \R^3$. The requirements for local existence in \cite{BemficaDisconziNoronha_IS_bulk} are satisfied because of (A2) and $\mathring{\Density}+\mathring{p}+\mathring{\Pi} > 0$.
Invoking Theorem \ref{T:Main_theorem_blowup} we conclude that the solution cannot be continued as a $C^1$-admissible solution past a finite $T_0>0$.

To address the remaining part of the statement, use the mentioned fact that the M\"uller-Israel-Stewart equations can be written as a first-order symmetric hyperbolic system. Knowing this, the statement that either the quantity in \eqref{E:C1} blows up or $(\Density, n, \Pi)$ is not contained in any compact subset of $\mathcal{P}$ is a consequence of Proposition 1.5 of \cite{Taylor3}, together with the following bound for solutions with initial data as in Theorem \ref{T:Main_theorem_blowup_simpler}
\begin{align}
    \vert \mathbf{u}(t, \cdot)\vert _\infty \leq R(T_0) \vert \nabla \mathbf{u}(t, \cdot)\vert _\infty.
\end{align}
The derivation of this bound uses $\mathbf{u}=0$ outside of $B_{R(t)}$.
\qed

\medskip
\noindent \emph{Proof of Theorem \ref{T:Theorem_stability}:} This follows at once from the constructions
in the proof of Theorem \ref{T:Main_theorem_blowup} (see Remark \ref{R:Open_data}). \qed

\medskip
\noindent \emph{Proof of Theorem \ref{T:Main_theorem_Euler}:} In Theorem \ref{T:Main_theorem_blowup},
take $\zeta=0$ and $\mathring{\Pi}=0$. By uniqueness, we conclude that the corresponding
solution to the M\"uller-Israel-Stewart equations is also a solution to the relativistic Euler equations. Note that here Proposition \ref{prop:solving_TF} is not relevant and Proposition \ref{L:estimate_for_Pi} is trivially true.
\qed

\begin{remark}\label{R:Proof_fails}
We can now explain where our proof of Theorem \ref{T:Main_theorem_blowup_simpler} fails if $n=0$. See Remark \ref{R:n_zero_case} for more context.

Let us consider the case where $n$ is absent, so that the evolution is given by
    \begin{align}
        &u^\al \nabla_\al \Density + (\Density+p+\Pi) \nabla_\al u^\al = 0, \label{eq0.1} \\
        &(\Density + p+ \Pi) u^\beta \nabla_\beta u_\al + \Proj_\al^\beta \nabla_\beta(p+\Pi) = 0,
        \\
    &\tau_0 u^\al \nabla_\al \Pi + \Pi +\lambda \Pi^2 +\zeta \nabla_\al u^\al = 0,
    \end{align}
    where $p = p(\rho), \zeta = \zeta(\rho), \tau_0 = \tau_0(\rho), \lambda = \lambda(\rho)$. 
    Using \eqref{eq0.1}, the evolution equation for $\Pi$ reads
    \begin{align}
        \dot \Pi = - \frac{(1+\lambda \Pi)\Pi}{\tau_0} + \frac{\zeta \dot{\rho}}{\tau_0 (\rho+p+\Pi)}
        \label{E}
    \end{align}
    Suppose we try similar arguments as in the proof of Theorem \ref{T:Main_theorem_blowup_simpler}. We then need to generalize the a-priori estimate on $\Pi$. This can be done by replacing the operator $\mathcal T$ in \eqref{E:Toward_indentity_for_TF} by the following operator 
    \begin{align}
        \mathcal{T} F = (\rho + p(\rho) + \Pi) \del_\rho F + \frac{\zeta(\rho)}{\tau_0(\rho)} \del_{\Pi} F \	\label{T}
    \end{align}
    acting on a function $F = F(\rho, \Pi)$. As a consequence of \eqref{E} and \eqref{T}, the following identity holds (the analogue of \eqref{indentity_for_TF}):
    \begin{align}
        \frac{d}{d\tau} F + (\del_\Pi F) \frac{(1+\lambda \Pi)\Pi}{\tau_0} = \frac{\dot \rho}{\rho+p+\Pi} \mathcal{T} F \label{I}
    \end{align} along a flowline of the fluid. 
    
    At this stage, we need to decide on a transport equation of the form
    \begin{align*}
        \mathcal{T} F = (RHS)
    \end{align*}
    where the right-hand side is chosen appropriately, in order to produce a suitable a-priori estimate for $F$ and hence for $\Pi$.
  
    Comparing the evolution equation for $\Pi$ with the identity \eqref{I}, we see that a natural choice of (RHS) would be  
    $(RHS) = \zeta/\tau_0$, leading to the equation $\dot{\Pi} + \tau_0^{-1}(1+\lambda \Pi)\left(1-\del_\Pi F\right)\Pi =\frac{d}{d\tau}F$. The completion of the proof of Proposition \ref{L:estimate_for_Pi} would then proceed as above, provided we can construct a function $F(\rho, \Pi)$ that is bounded by a constant. 

    However, the choice of $\mathcal{T} F = \zeta/\tau_0$ leads to 
    \begin{align*}
         \mathcal{T} F = (\rho + p(\rho) + \Pi) \del_\rho F + \frac{\zeta(\rho)}{\tau_0(\rho)} \del_{\Pi} F = \frac{\zeta(\rho)}{\tau_0(\rho)}
    \end{align*}
    which is equivalent to 
    \begin{align*}
        (\rho + p(\rho) + \Pi) \frac{\tau_0(\rho)}{\zeta_0(\rho)} \del_\rho F + \del_{\Pi} F = 1
    \end{align*}
    which does not lead to a uniform bound for $F$.
\end{remark}

\section{Riemann invariants}
\label{S:Riemann}
We consider the fluid equations in $1+1$-dimensional Minkowski spacetime:
\begin{align}
\label{E:1_1_MIS_system}
\begin{split}
    u^\mu \nabla_\mu \Density + (\Density+q) \nabla_\mu u^\mu = 0,\\
    (\Density+q) u^\mu \nabla_\mu u^\nu + \Proj^{\nu\mu}\nabla_\mu q = 0,\\
    u^\mu \nabla_\mu q + c^2 (\Density+q) \nabla_\mu u^\mu + f = 0 
\end{split}
\end{align}
where we have written $q = \Pi+p, f = \tau_0^{-1}(\Pi + \lambda \Pi^2)$ and 
\begin{align}
    c^2 := \del_{\Density} p + \frac{\zeta}{\tau_0 (\Density+q)}.
    \label{E:Sound_speed_q}
\end{align}
Note also that $-1 = u_\mu u^\mu = -(u^0)^2 + (u^1)^2$. In this Section we
assume that $p$, $\zeta$, $\tau_0$ depend only on $\Density$, i.e. $p = p(\Density)$. 
We can then take $\Density$, $u$, and $q$ as primary variables.

In $1+1$ spacetime, a standard approach to prove the existence of shocks is to diagonalize the principal part of the system by using Riemann invariants. We will show that Riemann invariants do not exist for \eqref{E:1_1_MIS_system}, except possibly when a very special relation holds.

After dividing the second equation of \eqref{E:1_1_MIS_system} by $(u^0)^2$, 
equations \eqref{E:1_1_MIS_system} can be written in the form
\begin{align}\label{E:1_1_MIS_system_quasilinear}
    \mathcal{A}^0 \del_0 \Psi + \mathcal{A}^1 \del_1 \Psi + \mathcal{B} = 0
\end{align}
where $\Psi = \begin{pmatrix} \Density & u^1 & q\end{pmatrix}^T$ and
\begin{align}
    &\mathcal{A}^0 = \begin{pmatrix}
    u^0 & \frac{(\Density+q)u^1}{u^0} & 0 \\ 
    0 & \frac{\Density+q}{u^0} & \frac{u^1}{u^0} \\ 
    0 & \frac{c^2(\Density+q)u^1}{u^0} & u^0 
    \end{pmatrix},~~~
    \mathcal{A}^1 = \begin{pmatrix}
    u^1 & \Density+q & 0 \\ 
    0 & \frac{(\Density+q)u^1}{(u^0)^2} & 1\\ 
    0 & c^2 (\Density+q) & u^1 
    \end{pmatrix},~~~
    \mathcal{B} = \begin{pmatrix} 0 \\ 0 \\ f
    \end{pmatrix}.
\end{align}
The standard strategy to diagonalize the principal part of \eqref{E:1_1_MIS_system_quasilinear} is to show that
\begin{itemize}
    \item the left eigenvectors $\{l^A(\Psi)\}_{A=1, 2, 3}$ of $(\mathcal{A}^0(\Psi))^{-1}\mathcal{A}^1(\Psi)$ are linearly independent
    \item there exist functions $\Lambda^A=\Lambda^A(\Psi), \alpha^A = \alpha^A(\Psi)$ such that
    \begin{align}\label{E:Riemann_Invariants}
        \Lambda^A(\Psi) l^A(\Psi) = \nabla \alpha^A(\Psi)  
    \end{align}
    where $\nabla$ denotes the gradient with respect to $\Psi$. 
\end{itemize}
We check at once that $(\mathcal{A}^0(\Psi))^{-1}$ exists. The eigenvalues and corresponding
eigenvectors of $(\mathcal{A}^0(\Psi))^{-1}\mathcal{A}^1(\Psi)$  are found to be, respectively,
\begin{align}
\begin{split}
\lambda^1 & = \frac{u^1}{u^0}, \, \lambda^2 = \frac{u^1 + c u^0}{c u^1 + u^0},\,
\, \lambda^3 = \frac{-u^1+c u^0}{c u^1 - u^0}, \\
 l^1 & =  \begin{pmatrix} -c^2 & 0 & 1\end{pmatrix}^T,  \\
 l^2 & =  \begin{pmatrix} 0& \frac{(\Density+q)c}{u^0} & 1\end{pmatrix}^T,  \\
  l^3 & =  \begin{pmatrix} 0& -\frac{(\Density+q)c}{u^0} & 1\end{pmatrix}^T.
\end{split}
\nonumber
\end{align}

\begin{theorem}\label{T:Riemann_invariants}
A necessary condition for the existence of Riemann invariants for
the system \eqref{E:1_1_MIS_system} is that $p(\Density) = constant$.
\end{theorem}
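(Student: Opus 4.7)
The plan is to analyze the Pfaff integrability condition for each of the three left eigen-1-forms $l^A$, viewed as 1-forms on $\Psi$-space in the basis $(d\Density, du^1, dq)$. Suppose Riemann invariants exist; by definition there are smooth functions $\alpha^A(\Psi)$ and nowhere-vanishing integrating factors $\Lambda^A(\Psi)$ with $\nabla\alpha^A = \Lambda^A l^A$. Because $p,\zeta,\tau_0$ depend only on $\Density$, the sound speed \eqref{E:Sound_speed_q} is a function of $(\Density, q)$ alone, so the 1-form associated with $l^1 = (-c^2, 0, 1)^T$ lies in the two-dimensional $(\Density, q)$-plane and is automatically locally integrable by the classical Pfaff theorem in two variables. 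The obstruction to the existence of Riemann invariants must therefore come from $l^2$, and the argument for $l^3$ is identical via $c\mapsto -c$.

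\textbf{Reduction to a constraint on $c$.} I would write $\nabla\alpha^2 = \Lambda^2 l^2$ componentwise. The $\Density$-component gives $\partial_\Density \alpha^2 = 0$, so $\alpha^2 = \alpha^2(u^1, q)$; the $q$-component then gives $\Lambda^2 = \partial_q \alpha^2$, which is likewise $\Density$-independent. The remaining $u^1$-component reads
\begin{align}
\partial_{u^1}\alpha^2(u^1,q) \;=\; \frac{\Lambda^2(u^1,q)}{u^0(u^1)}\,(\Density+q)\,c(\Density,q),
\nonumber
\end{align}
and since the left-hand side together with $\Lambda^2/u^0$ are independent of $\Density$, the product $(\Density+q)\,c(\Density,q)$ must itself be $\Density$-independent. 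Equivalently, there is a function $\psi(q)$ such that $(\Density+q)^2 c^2 = \psi(q)^2$ holds on the admissible domain.

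\textbf{Concluding step and main difficulty.} Substituting $c^2 = p'(\Density) + \zeta(\Density)/[\tau_0(\Density)(\Density+q)]$ into this constraint yields
\begin{align}
(\Density+q)^2\, p'(\Density) \;+\; (\Density+q)\,\frac{\zeta(\Density)}{\tau_0(\Density)} \;=\; \psi(q)^2.
\nonumber
\end{align}
Differentiating in $\Density$ and writing the result as a polynomial in $s := \Density+q$ with coefficients depending on $\Density$, which must vanish identically since $s$ varies over an open set at fixed $\Density$, I would read off: the constant-in-$s$ coefficient forces $\zeta/\tau_0\equiv 0$, hence also $(\zeta/\tau_0)'\equiv 0$; the linear-in-$s$ coefficient then forces $p'(\Density)\equiv 0$; and the $s^2$ coefficient, $p''(\Density)=0$, is automatic. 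Thus $p$ must be constant, which is the asserted necessary condition. The main conceptual hurdle is the middle paragraph: establishing that the $\Density$-independence of $\alpha^2$ and $\Lambda^2$ propagates through the $u^1$-equation to pin down the $\Density$-dependence of $c$ so rigidly; once that is in hand, the polynomial-separation argument is elementary.
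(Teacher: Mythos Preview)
Your proposal is correct and follows essentially the same route as the paper: both extract the integrability condition for the eigen-1-form $l^2$ and reduce it to the statement that $(\Density+q)\,c(\Density,q)$ must be independent of $\Density$. The paper packages this as $\operatorname{curl}(\Lambda l^2)=0$, you package it as the gradient equations for $\alpha^2$; the content is identical. Your concluding computation---expanding $(\Density+q)^2 c^2$, differentiating in $\Density$, and reading off the coefficients of the resulting polynomial in $s=\Density+q$ at fixed $\Density$---is slightly more direct than the paper's detour through $\partial c/\partial q = -c/(\Density+q)$ and the intermediate identity $\tfrac12\,\zeta/[\tau_0(\Density+q)] + p'(\Density)=0$, but both arrive at $p'\equiv 0$ (and, incidentally, $\zeta/\tau_0\equiv 0$).
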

\begin{remark}
The condition $p(\Density) = constant$ will not hold for most physical
systems under natural assumptions. Thus, we can say that in general Riemann invariants do not
exist for the system \eqref{E:1_1_MIS_system}.
\end{remark}
\begin{proof}
We will show that the existence of Riemann invariants implies
\begin{align}
\frac{1}{2} \frac{\zeta}{\tau_0(\Density + q)} + \frac{\partial p}{\partial \Density} = 0.
\label{E:Necessary_Riemann}
\end{align}
Multiplying \eqref{E:Necessary_Riemann} by $p+q$ and differentiating with respect to $q$
gives $\partial_\Density p = 0$.

Consider $l^2$, which we abbreviate as 
$l =  l^2  =  \begin{pmatrix} 0& h & 1\end{pmatrix}^T$. We also abbreviate $\Lambda^2 = \Lambda$.
 According to \eqref{E:Riemann_Invariants}, we have
$\operatorname{curl} (\Lambda l ) =0$, which gives:
\begin{align}
\begin{split}
\frac{\partial \Lambda}{\partial \Psi^2} - h \frac{\partial \Lambda}{\partial \Psi^3}
-\Lambda \frac{\partial h}{\partial \Psi^3}  &= 0,\\
-\frac{\partial \Lambda}{\partial \Psi^1} &=0,\\
\frac{\partial \Lambda}{\partial \Psi^1} h + \Lambda \frac{\partial h}{\partial \Psi^3} &=0.
\end{split}
\label{E:curl}
\end{align}
The second equation in \eqref{E:curl} implies that $\Lambda$ is independent of $\Psi^1$. The third
equation then implies that $h$ is independent of $\Psi^3$. Computing 
$\partial_{\Psi^3} h = \partial_{q} h$ and setting
it equal to zero implies
\begin{align}
\frac{\partial c}{\partial q} = -\frac{c}{\Density + q}.
\nonumber
\end{align}
Computing $\partial_q c$ from \eqref{E:Sound_speed_q} and setting both expressions equal to each
other gives \eqref{E:Necessary_Riemann}. 
\end{proof}

\section{Acknowledgements}
The authors would like to thank Jorge Noronha for useful discussions on a preliminary version of this manuscript. MMD gratefully acknowledges support from NSF grant \# 2107701, from a Sloan Research Fellowship provided by the Alfred P. Sloan foundation,
from a Discovery grant administered by Vanderbilt University, and from a Deans' Faculty
Fellowship. VH's work on this project was funded (full or in-part) by the University of Texas at San Antonio, Office of the Vice President for Research, Economic Development, and Knowledge Enterprise.  VH gratefully acknowledges partial support by NSF grants DMS-1614797 and DMS-1810687.

\section{Data Availability statement}
Data sharing not applicable to this article as no datasets were generated or analysed during the current study.

\section{Conflict of Interest Statement}
On behalf of all authors, the corresponding author states that there is no conflict of interest.

\printbibliography

\Addresses

\end{document}